\begin{document}

\newtheorem{theorem}{Theorem}
\newtheorem{lemma}[theorem]{Lemma}
\newtheorem{corollary}[theorem]{Corollary}
\newtheorem{proposition}[theorem]{Proposition}

\theoremstyle{definition}
\newtheorem*{definition}{Definition}
\newtheorem*{remark}{Remark}
\newtheorem*{example}{Example}


\def\cA{\mathcal A}
\def\cB{\mathcal B}
\def\cC{\mathcal C}
\def\cD{\mathcal D}
\def\cE{\mathcal E}
\def\cF{\mathcal F}
\def\cG{\mathcal G}
\def\cH{\mathcal H}
\def\cI{\mathcal I}
\def\cJ{\mathcal J}
\def\cK{\mathcal K}
\def\cL{\mathcal L}
\def\cM{\mathcal M}
\def\cN{\mathcal N}
\def\cO{\mathcal O}
\def\cP{\mathcal P}
\def\cQ{\mathcal Q}
\def\cR{\mathcal R}
\def\cS{\mathcal S}
\def\cU{\mathcal U}
\def\cT{\mathcal T}
\def\cV{\mathcal V}
\def\cW{\mathcal W}
\def\cX{\mathcal X}
\def\cY{\mathcal Y}
\def\cZ{\mathcal Z}


\def\sA{\mathscr A}
\def\sB{\mathscr B}
\def\sC{\mathscr C}
\def\sD{\mathscr D}
\def\sE{\mathscr E}
\def\sF{\mathscr F}
\def\sG{\mathscr G}
\def\sH{\mathscr H}
\def\sI{\mathscr I}
\def\sJ{\mathscr J}
\def\sK{\mathscr K}
\def\sL{\mathscr L}
\def\sM{\mathscr M}
\def\sN{\mathscr N}
\def\sO{\mathscr O}
\def\sP{\mathscr P}
\def\sQ{\mathscr Q}
\def\sR{\mathscr R}
\def\sS{\mathscr S}
\def\sU{\mathscr U}
\def\sT{\mathscr T}
\def\sV{\mathscr V}
\def\sW{\mathscr W}
\def\sX{\mathscr X}
\def\sY{\mathscr Y}
\def\sZ{\mathscr Z}


\def\fA{\mathfrak A}
\def\fB{\mathfrak B}
\def\fC{\mathfrak C}
\def\fD{\mathfrak D}
\def\fE{\mathfrak E}
\def\fF{\mathfrak F}
\def\fG{\mathfrak G}
\def\fH{\mathfrak H}
\def\fI{\mathfrak I}
\def\fJ{\mathfrak J}
\def\fK{\mathfrak K}
\def\fL{\mathfrak L}
\def\fM{\mathfrak M}
\def\fN{\mathfrak N}
\def\fO{\mathfrak O}
\def\fP{\mathfrak P}
\def\fQ{\mathfrak Q}
\def\fR{\mathfrak R}
\def\fS{\mathfrak S}
\def\fU{\mathfrak U}
\def\fT{\mathfrak T}
\def\fV{\mathfrak V}
\def\fW{\mathfrak W}
\def\fX{\mathfrak X}
\def\fY{\mathfrak Y}
\def\fZ{\mathfrak Z}


\def\C{{\mathbb C}}
\def\F{{\mathbb F}}
\def\K{{\mathbb K}}
\def\L{{\mathbb L}}
\def\N{{\mathbb N}}
\def\Q{{\mathbb Q}}
\def\R{{\mathbb R}}
\def\Z{{\mathbb Z}}
\def\E{{\mathbb E}}
\def\T{{\mathbb T}}
\def\P{{\mathbb P}}
\def\D{{\mathbb D}}


\def\eps{\varepsilon}
\def\mand{\qquad\mbox{and}\qquad}
\def\\{\cr}
\def\({\left(}
\def\){\right)}
\def\[{\left[}
\def\]{\right]}
\def\<{\langle}
\def\>{\rangle}
\def\fl#1{\left\lfloor#1\right\rfloor}
\def\rf#1{\left\lceil#1\right\rceil}
\def\le{\leqslant}
\def\ge{\geqslant}
\def\ds{\displaystyle}

\def\xxx{\vskip5pt\hrule\vskip5pt}
\def\yyy{\vskip5pt\hrule\vskip2pt\hrule\vskip5pt}
\def\imhere{ \xxx\centerline{\sc I'm here}\xxx }

\newcommand{\comm}[1]{\marginpar{
\vskip-\baselineskip \raggedright\footnotesize
\itshape\hrule\smallskip#1\par\smallskip\hrule}}


\def\e{\mathbf{e}}
\def\sPrc{{\displaystyle \sP_r^{(c)}}}

\title{\bf  Zeros of random linear combinations of entire functions with complex Gaussian coefficients}

\author{
{\sc Aaron M.~Yeager} \\
{Department of Mathematics, Oklahoma State University} \\
{Stillwater, OK 74078 USA} \\
{\tt aaron.yeager@okstate.edu}}

\maketitle

\begin{abstract}
We study zero distribution of random linear combinations of the form
$$P_n(z)=\sum_{j=0}^n\eta_jf_j(z),$$
in any Jordan region $\Omega \subset \mathbb C$. The basis functions $f_j$ are entire functions that are real-valued on the real line, and $\eta_0,\dots,\eta_n$ are complex-valued iid Gaussian random variables.  We derive an explicit intensity function for the number of zeros of $P_n$ in $\Omega$ for each fixed $n$.  Our main application is to polynomials orthogonal on the real line. Using the Christoffel-Darboux formula the intensity function takes a very simple shape.  Moreover, we give the limiting value of the intensity function when the orthogonal polynomials are associated to Szeg\H{o} weights.
\end{abstract}

\textbf{Keywords:} Random Polynomials, Orthogonal Polynomials, Christoffel-Darboux Formula, Szeg\H{o} Weights.

\section{Introduction}

The systematic study of zeros of polynomials $P_n(z)=\sum_{j=0}^n\eta_jz^j$ with random coefficients, called \emph{random algebraic polynomials}, dates back to the 1932 paper due to Bloch and P\'{o}lya \cite{BP}.  They showed that when $\{\eta_j\}$ are iid random variables that take values from the set $\{-1,0,1\}$ with equal probabilities, the expected number of real zeros is $O(\sqrt{n})$.  Early advancements in the subject were then made by Littlewood and Offord \cite{LO}, Kac \cite{K1}, \cite{K2}, Rice \cite{R}, Erd\H{o}s and Offord \cite{EO}, and many others.  A nice history of the early progress in this topic is given by Bharucha-Reid and Sambandham \cite{BRS} and by Farahmand \cite{FB}.

Kac \cite{K1}, \cite{K2}  produced a formula that gives the expected number of real zeros of $P_n(z)$ when each $\eta_j$ are independent real-valued Gaussian coefficients.  From that formula, he was able to show that the expected number of real roots of the random algebraic polynomial is asymptotic to $2\pi^{-1}\log n$ as $n\rightarrow \infty$.  The error term in Kac's asymptotic was further sharpened by Hammersley \cite{HM}, Wang \cite{WG}, Edelman and Kostlan \cite{EK}, and Wilkins \cite{WL}.

Shepp and Vanderbei \cite{SV} extended Kac's formula in 1995  by giving a formula in the complex plane for the expected number of zeros of a random algebraic polynomial when the random variables are real-valued independent standard Gaussian. They were also able to obtain a limit of the intensity  function (density function of zeros) as $n\rightarrow \infty$.  Generalizations were made of their result by Ibragimov and Zeitouni \cite{IZ} for random variables from the domain of attraction of a stable law.

In 1996,  Farahmand \cite{F} produced a formula for the intensity function for random algebraic polynomials when the random coefficients are complex-valued iid Gaussian random variables. As an application, Farahmand  considered the spanning functions of the random polynomial to be  monomials and also the cosine functions.  Using a theorem of Adler (Theorem 5.1.1, p. 95 of \cite{A}) in 1998,  Farahmand and Jahangiri \cite{KJ} extended the previous result by Farahmand by putting weights $\{g_j\}_{j=0}^n\subset \R $ on the random algebraic polynomials.
Then in 2001 Farahmand and Grigorash \cite{FG}, also using Adler's theorem, gave a formula for the intensity function and its limiting value when the random polynomials are spanned by the cosine functions.

Some authors have studied the intensity function for what are known as Gaussian Analytic Functions (GAF), $P(z)=\sum_{j=0}^{\infty}\eta_j f_j(z)$ where the $f_j$'s are square summable analytic functions on a domain, and the $\eta_j$'s are iid Gaussian random variables, in terms of the distributional Laplacian.  In 2000, a formula for the intensity function of a GAF was given by Hough, Krishnapur, Peres, and Vir$\acute{\text{a}}$g in \cite{ZGAF} (Section 2.4.2, pp. 25-26) when the $\eta_j$'s are complex-valued iid Gaussian random variables.  Also during that year, Feldheim \cite{FL} gave the same formula (Theorem 2, p. 6) and a formula (Theorem 3, p. 7) for the intensity function of a GAF when the $f_j$'s are real-valued on the real line and the $\eta_j$'s are real-valued independent standard Gaussian random variables.

Recently in 2015 Vanderbei \cite{CZRS} produced an explicit formula of an intensity function for  finite sums of real-valued iid standard Gaussian random variables with the spanning functions taken to be entire functions that are real-valued on the real line.  Vanderbei also gave the limiting intensity function when the spanning functions are Weyl polynomials, Taylor polynomials, and the truncated Fourier series.

In this paper, following a similar method of proof as given by Vanderbei in \cite{CZRS}, we derive an explicit formula for a class of finite linear combinations of entire functions with complex-valued iid Gaussian coefficients.  To specify our results, let $\{f_j(z)\}_{j=0}^n$ be a sequence of entire functions in the complex plane that are real-valued on the real line.  We will be studying the expectation of the number zeros of random polynomials of the form
\begin{equation}\label{P}
P_n(z)=\sum_{j=0}^n \eta_j f_j(z), \ \ \ \ z\in\C,
\end{equation}
where $n$ is a fixed integer, and $\eta_j=\alpha_j + i \beta_j$, $j=0,1, \dots, n$, with $\{\alpha_j\}_{j=0}^n$ and $\{\beta_j\}_{j=0}^n$ being sequences of iid standard normal random variables. The formula we derive for the intensity function is expressed in terms of the kernels
\begin{equation}\label{K01}
K_{n}(z,w)=\sum_{j=0}^{n}f_j(z)\overline{f_j(w)},\ \ \ \ \ \ \ K_{n}^{(0,1)}(z,w)=\sum_{j=0}^{n} f_j(z)\overline{f_j^{\prime}(w)},
\end{equation}
and
\begin{equation}\label{K2}
K_{n}^{(1,1)}(z,w)=\sum_{j=0}^{n}f_j^{\prime}(z)\overline{f_j^{\prime}(w)}.
\end{equation}
We note that since our functions $f_j(z)$ are entire functions that are real-valued on the real line, by the Schwarz Reflection Principle we have $\overline{f_j(z)}=f_j(\bar{z})$ for all $j=0,1,\dots,n$, and all $z\in \C$.

Let $N_n(\Omega)$ denote the (random) number of zeros of $P_n(z)$ as defined by \eqref{P} in a Jordan region $\Omega$ of the complex plane.  Our formula for the intensity function is the following:

\begin{theorem}\label{thm2.1}
Let $P_n(z)$ be the random sum \eqref{P} spanned by entire functions that are real-valued on the real line with complex-valued iid Gaussian coefficients. For each Jordan region $\Omega \subset \{z\in \C : K_{n}(z,z)\neq 0\}$, we have that the intensity function $h_n$ satisfies
$$\E[N_n(\Omega)]=\int_{\Omega}h_n(x,y)\ dx \ dy ,$$
with
$$h_n(x,y)=h_n(z)=\frac{K_{n}^{(1,1)}(z,z)K_{n}(z,z)-\left|K_{n}^{(0,1)}(z,z)\right|^2}{\pi \left(K_{n}(z,z)\right)^2},$$
where the kernels $K_n(z,z)$, $K_n^{(0,1)}(z,z)$, and $K_n^{(1,1)}(z,z)$, are defined in \eqref{K01} and \eqref{K2}.
\end{theorem}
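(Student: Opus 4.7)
The plan is to treat a complex zero of $P_n$ as a simultaneous real zero of the pair $F(x,y)=(U_n(x,y),V_n(x,y))$, where $U_n=\operatorname{Re}P_n$, $V_n=\operatorname{Im}P_n$, $z=x+iy$, and apply the two-dimensional Kac--Rice formula in the form used by Vanderbei \cite{CZRS} via Adler \cite{A}. Since the coefficients are Gaussian and the $f_j$ are deterministic, $F$ is a smooth centered Gaussian field, so
\begin{equation*}
\E[N_n(\Omega)] = \int_\Omega \E\!\left[|\det DF(x,y)|\,\Big|\,F(x,y)=0\right] p_{F(x,y)}(0,0)\,dx\,dy,
\end{equation*}
and the task reduces to evaluating the two factors of the integrand at a single point $z$.

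The first simplification is that $P_n$ is holomorphic, so the Cauchy--Riemann equations give $\partial_xU_n=\partial_yV_n$ and $\partial_yU_n=-\partial_xV_n$, hence
\begin{equation*}
|\det DF|= (\partial_xU_n)^2+(\partial_xV_n)^2 = |P_n'(z)|^2.
\end{equation*}
It thus suffices to study the four-dimensional centered Gaussian vector $(U_n,V_n,\operatorname{Re}P_n'(z),\operatorname{Im}P_n'(z))$. Writing $\eta_j=\alpha_j+i\beta_j$ with $\alpha_j,\beta_j$ independent standard normal, a direct expansion using $\E\alpha_j^2=\E\beta_j^2=1$ and $\E[\alpha_j\beta_j]=0$ gives $\operatorname{Var}(U_n)=\operatorname{Var}(V_n)=K_n(z,z)$, $\operatorname{Cov}(U_n,V_n)=0$, $\operatorname{Var}(\operatorname{Re}P_n')=\operatorname{Var}(\operatorname{Im}P_n')=K_n^{(1,1)}(z,z)$, $\operatorname{Cov}(\operatorname{Re}P_n',\operatorname{Im}P_n')=0$, while the cross-covariances between the two blocks assemble into the $2\times2$ matrix representing complex multiplication by $K_n^{(0,1)}(z,z)$:
\begin{equation*}
A=\begin{pmatrix}\operatorname{Re}K_n^{(0,1)}(z,z) & -\operatorname{Im}K_n^{(0,1)}(z,z)\\ \operatorname{Im}K_n^{(0,1)}(z,z) & \phantom{-}\operatorname{Re}K_n^{(0,1)}(z,z)\end{pmatrix},
\end{equation*}
which satisfies the crucial identity $A^TA=|K_n^{(0,1)}(z,z)|^2 I_2$.

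With these in hand, the density $p_{F(x,y)}(0,0)=1/(2\pi K_n(z,z))$ is immediate (the assumption $K_n(z,z)\neq 0$ guaranteeing non-degeneracy), and the standard conditional-Gaussian formula yields conditional covariance $\bigl(K_n^{(1,1)}(z,z)-|K_n^{(0,1)}(z,z)|^2/K_n(z,z)\bigr)I_2$ for $(\operatorname{Re}P_n',\operatorname{Im}P_n')$ given $F=0$, so
\begin{equation*}
\E\bigl[|P_n'(z)|^2 \,\big|\, P_n(z)=0\bigr]=\frac{2\bigl(K_n^{(1,1)}(z,z)K_n(z,z)-|K_n^{(0,1)}(z,z)|^2\bigr)}{K_n(z,z)}.
\end{equation*}
Multiplying by $p_F(0,0)$ and inserting into Kac--Rice yields the stated formula. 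The main obstacle is really organizational: tracking the factor of $2$ that enters because the complex coefficient $\eta_j$ has total real variance $2$, and verifying the block structure of the $4\times4$ covariance matrix so that $A^TA$ collapses to a scalar multiple of the identity. This last complex-linear feature is precisely what makes the final expression depend on $|K_n^{(0,1)}|^2$ rather than on its real and imaginary parts separately.
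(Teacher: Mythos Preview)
Your argument is correct, but it is a genuinely different route from the one in the paper. The paper does \emph{not} invoke a Kac--Rice formula. Instead it follows Vanderbei's argument-principle method: one writes $N_n(\Omega)=\frac{1}{2\pi i}\int_{\partial\Omega}P_n'(z)/P_n(z)\,dz$, justifies the interchange of expectation and contour integral via Fubini--Tonelli (this is the lengthiest technical step in the paper), computes $F(z)=\E[P_n'(z)/P_n(z)]=K_n^{(0,1)}(\bar z,\bar z)/K_n(z,z)$ by Cholesky-factoring the $4\times4$ covariance matrix of $(\xi_1,\xi_2,\xi_3,\xi_4)$, and finally applies Green's theorem and computes $\frac{1}{\pi}\partial_{\bar z}F$ to obtain $h_n$. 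Your approach instead conditions directly on $P_n(z)=0$ and evaluates $\E[|P_n'(z)|^2\mid P_n(z)=0]\,p_{P_n(z)}(0)$. The two computations share the same covariance matrix (your block $A$ is exactly what the paper records as $\E[\xi_i\xi_k]$ for $i\in\{1,2\}$, $k\in\{3,4\}$), but the paper never forms a conditional covariance; the role of your identity $A^TA=|K_n^{(0,1)}|^2 I_2$ is played instead by the algebraic collapse in the Cholesky entries $l_{31},l_{32},l_{41},l_{42}$.

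What each approach buys: your Kac--Rice route is shorter and more conceptual, and the complex-linear structure (the $A^TA=|K_n^{(0,1)}|^2 I_2$ observation) makes the final cancellation transparent. The paper's argument-principle route avoids citing a black-box Kac--Rice theorem and instead isolates exactly where the analytic input is needed (the Fubini step), which it verifies in detail for polynomial spanning functions. One small correction: your attribution ``the two-dimensional Kac--Rice formula in the form used by Vanderbei \cite{CZRS}'' is not accurate; Vanderbei's paper uses the argument-principle method, and it is the Farahmand--Adler line of work that proceeds via Kac--Rice. You should also state, at least briefly, which hypotheses of Adler's theorem you are invoking and why $K_n(z,z)\neq 0$ suffices for them.
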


We note that since all the functions that make up $h_n$ are real valued, the function $h_n$ is real valued.  Furthermore, the function $h_n$ is in fact nonnegative.

The formula in Theorem \ref{thm2.1} and the other results of this paper were posted as a preprint to arXiv on May 22, 2016 \cite{AY}, and were presented at the $15^{\text{th}}$ International Conference in Approximation Theory in San Antonio, TX, on May 25, 2016.  On May $25^{\text{th}}$ at the conference another participant, Andrew Ledoan \cite{AL}, also presented the result of Theorem \ref{thm2.1} in a slightly different form.  We also note that the result in Theorem \ref{thm2.1} can be shown to be the same formula given by Hough, Krishnapur, Peres, and Vir$\acute{\text{a}}$g in \cite{ZGAF} and also given by Feldheim \cite{FL}. However, our method of proof for deriving the intensity function is different than in \cite{ZGAF} and \cite{FL}.

Studying the case when the spanning functions $f_j(z)$ of \eqref{P} are polynomials orthogonal on the real line (OPRL) has been considered by many authors.  We say that a collection of polynomials $\{p_j(z)\}_{j\geq 0}$ are \emph{orthogonal on the real line with respect to $\mu$}, with $\text{supp}\ \mu \subseteq \R$, if
$$\int p_n(x)p_m(x)d\mu(x)=\delta_{nm}, \ \ \ \ \text{for all $n,m\in \N \cup\{0\}$}.$$
We note that when polynomials are orthogonal on the real line, they have real coefficients and are real-valued on the real line.

In 1971 Das \cite{D} showed that when the OPRL are Legendre polynomials and the random variables are real-valued iid Gaussian, the average number of zeros of the random orthogonal polynomial in $(-1,1)$ is asymptotic to $n/\sqrt{3}$ when $n$ is large.  Das and Bhatt \cite{DB} extended this result in 1982 to include the class of OPRL to be the classicial orthogonal polynomials, Jacobi, Laguerre, and Hermite, and showed the same asymptotic held true for the zeros of the random orthogonal polynomial in $(-1,1)$. Their results concerning the Hermite and Laguerre had some gaps.  These gaps were fixed in 2015 by Lubinsky, Pritsker, and Xie \cite{LPX} by considering a larger class of OPRL that had only mild assumptions on the measure and weight function.  Using potential theory for their results, they showed that the same asymptotic holds for the larger class of OPRL.   These results were further generalized by Lubinsky, Pritsker, and Xie \cite{LPX2} to allow the OPRL to have support on the whole real line, with the same asymptotic.

Other authors have considered the case when the random polynomial is spanned by orthogonal polynomials that satisfy orthogonality relations on curves or domains.   There has also been work done in the higher dimensional analogs of these settings, see Shiffman and Zelditch \cite{SHZ1}-\cite{SHZ3}, Bloom \cite{BL1} and \cite{BL2}, Bloom and Shiffman \cite{BLSH}, Bloom and Levenberg \cite{BLL}, and Bayraktar \cite{BY}.

We focus our applications on OPRL. Using the Christoffel-Darboux formula we show that the intensity function from Theorem \ref{thm2.1} greatly simplifies when the spanning functions are taken to be OPRL.  To be more explicit, the random orthogonal polynomials we will apply Theorem \ref{thm2.1} to are of the form
\begin{equation}\label{ROS}
P_n(z)=\sum_{j=0}^n \eta_j p_j(z), \ \ \ \ z\in\C,
\end{equation}
where $n$ is a fixed integer, $\{p_j(z)\}$ are OPRL, and $\eta_j=\alpha_j + i \beta_j$, $j=0,1, \dots, n$, with $\{\alpha_j\}_{j=0}^n$ and $\{\beta_j\}_{j=0}^n$ being sequences of iid standard normal random variables.

\begin{theorem}\label{OP}
For the random orthogonal polynomial \eqref{ROS}, the intensity function $h_n^P(z)$ as defined in Theorem \ref{thm2.1} simplifies to
\begin{align*}
h_n^P(z)
&=\frac{1}{4\pi\left(\emph{\text{Im}}(z)\right)^2}-\frac{|p_{n+1}^{\prime}(z)p_n(z)-p_n^{\prime}(z)p_{n+1}(z)|^2}{4\pi \left(\emph{\text{Im}}(p_{n+1}(z)p_n(\bar{z})  \right)^2},\ \ \ \ z\in\C.
\end{align*}
\end{theorem}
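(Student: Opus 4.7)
The plan is to expand the three kernels appearing in Theorem \ref{thm2.1} via the Christoffel--Darboux identity and then collapse the resulting rational expression by a short algebraic manipulation. Let $c_n = k_n/k_{n+1}$, where $k_j$ denotes the leading coefficient of $p_j$, and set
$$A(z,u) := p_{n+1}(z)p_n(u) - p_n(z)p_{n+1}(u), \qquad \zeta := z - \bar z = 2i\,\text{Im}(z).$$
Because each $p_j$ is real on $\R$, the Schwarz reflection principle gives $\overline{p_j(w)} = p_j(\bar w)$ and $\overline{p_j'(w)} = p_j'(\bar w)$. With $F(z,u) := \sum_{j=0}^n p_j(z)p_j(u)$, the CD formula reads $F(z,u) = c_n A(z,u)/(z-u)$, and the three kernels from Theorem \ref{thm2.1} become
$$K_n(z,z) = F(z,\bar z), \quad K_n^{(0,1)}(z,z) = F_u(z,\bar z), \quad K_n^{(1,1)}(z,z) = F_{zu}(z,\bar z).$$

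Direct differentiation, with $A$, $A_z$, $A_u$, $A_{zu}$ denoting the values of $A$ and its partials at $u=\bar z$, yields
$$K_n(z,z) = \frac{c_n A}{\zeta}, \quad K_n^{(0,1)}(z,z) = \frac{c_n[A_u\zeta + A]}{\zeta^2}, \quad K_n^{(1,1)}(z,z) = \frac{c_n[A_{zu}\zeta^2 + (A_z - A_u)\zeta - 2A]}{\zeta^3}.$$
Schwarz reflection at $u=\bar z$ yields $\bar A = -A$ and $\overline{A_u} = -A_z$, from which $\overline{K_n^{(0,1)}(z,z)} = c_n[A_z\zeta - A]/\zeta^2$. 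Multiplying out $K_n^{(1,1)}K_n - K_n^{(0,1)}\overline{K_n^{(0,1)}}$, the cross terms $\pm A(A_z-A_u)\zeta$ cancel, leaving
$$K_n^{(1,1)}(z,z)K_n(z,z) - |K_n^{(0,1)}(z,z)|^2 = \frac{c_n^2\bigl[(AA_{zu} - A_uA_z)\zeta^2 - A^2\bigr]}{\zeta^4}.$$
Dividing by $\pi(K_n(z,z))^2 = \pi c_n^2 A^2/\zeta^2$ produces two pieces, one of which is $-1/(\pi\zeta^2) = 1/(4\pi(\text{Im}\,z)^2)$.

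The remaining piece is the Wronskian-type factor $AA_{zu} - A_uA_z$. With shorthands $a=p_{n+1}(z)$, $b=p_n(z)$, $c=p_{n+1}(\bar z)$, $d=p_n(\bar z)$ (and primes denoting derivatives), a short expansion reveals the algebraic identity
$$AA_{zu} - A_uA_z = (ab' - a'b)(cd' - c'd).$$
Setting $W(z) := p_{n+1}'(z)p_n(z) - p_n'(z)p_{n+1}(z)$, one has $ab' - a'b = -W(z)$ and, by Schwarz, $cd' - c'd = -W(\bar z) = -\overline{W(z)}$, so the product above equals $|W(z)|^2$. Combining with $A = 2i\,\text{Im}(p_{n+1}(z)p_n(\bar z))$, hence $A^2 = -4(\text{Im}(p_{n+1}(z)p_n(\bar z)))^2$, delivers the second term of the claimed formula, including the minus sign.

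The main obstacle will be careful bookkeeping of the conjugations and the factors of $i$ in $\zeta$; in particular, the identity $\overline{A_u} = -A_z$ at $u=\bar z$ is what lets the awkward cross terms $A(A_z-A_u)\zeta$ cancel and produce a clean numerator. Once this cancellation is in place and the elementary identity $AA_{zu} - A_uA_z = (ab'-a'b)(cd'-c'd)$ is noticed, the remaining steps are routine.
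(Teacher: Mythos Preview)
Your proof is correct and follows essentially the same route as the paper: both arguments apply the Christoffel--Darboux formula to $\sum_j p_j(z)p_j(u)$, differentiate in $z$ and $u$, evaluate at $u=\bar z$, and simplify the resulting rational expression. The only noteworthy organizational difference is that the paper packages the simplification through the auxiliary kernel $K_n(z,\bar z)=\sum_j p_j(z)^2$, obtaining the intermediate form
\[
h_n^P(z)=\frac{1}{4\pi(\mathrm{Im}\,z)^2}\left(1-\frac{|K_n(z,\bar z)|^2}{(K_n(z,z))^2}\right),
\]
whereas you bypass this kernel and instead isolate the algebraic identity $AA_{zu}-A_uA_z=(ab'-a'b)(cd'-c'd)$ directly. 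Since the confluent Christoffel--Darboux formula gives $K_n(z,\bar z)=c_nW(z)$, these two simplifications are equivalent; the paper's intermediate form has the advantage of being reused in the proof of the limiting result (Theorem~\ref{GOP2}), while your Wronskian identity makes the cancellation mechanism more transparent.
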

Although the intensity $h_n^P$ has the $\text{Im}(z)$ and $\left(\text{Im}(p_{n+1}(z)p_n(\bar{z})  \right)^2$ in the denominator, these singularises cancel out algebraically when $z\in \R$ and $p_{n+1}(z)p_n(\bar{z})\in \R$.  Hence the intensity function $h_n^P$ is well defined on $\R$ also.
We also note that under the assumptions of Theorem \ref{OP}, the intensity function formula of Theorem \ref{thm2.1} holds for all $z\in \C$. This follows since $p_0(z)$ is a non-zero constant, so that $K_n(z,z)=\sum_{j=0}^n|p_j(z)|^2>0$.  Thus our intensity function is well defined and continuous everywhere on $\C$.

We conclude the paper by giving the limiting value of the intensity function for a class of random orthogonal polynomials. We say that $f(\theta)\geq 0$ belongs to the \emph{Szeg\H{o} weight class}, denoted by $G$, if $f(\theta)$ is defined and measurable in $[-\pi,\pi]$, and the integrals
$$\int_{-\pi}^{\pi} f(\theta)\ d\theta, \ \ \ \ \int_{-\pi}^{\pi}|\log f(\theta)|\ d\theta$$
exist with the first integral assumed to be positive.     When $w(x)$ is a weight function supported on $[-1,1]$ with $w(\cos \theta)|\sin \theta|=f(\theta)\in G$, the \emph{orthogonal polynomials associated to $w(x)$} are polynomials $\{p_j(z)\}_{j\geq 0}$, where $z\in \C$, with real coefficients such that
$$\int_{-1}^1 p_n(x)p_m(x)w(x)dx=\delta_{nm}, \ \ \  \text{for all $n,m\in \N \cup\{0\}$}.$$
Taking $w(\cos \theta)|\sin \theta|=f(\theta)\in G$ and using asymptotics by Szeg\H{o} in \cite{SZ} (Theorems 12.1.1 and 12.1.2, p. 297), and then appealing to our Theorem \ref{OP}, we are able to obtain the limiting value of the intensity function for the random orthogonal polynomials associated to $w(x)$.

\begin{theorem}\label{GOP2}
Let $w(x)$ be a weight function on the interval $-1\leq x\leq 1$ such that $w(\cos \theta )|\sin \theta |=f(\theta)$ belongs to the weight class $G$. The intensity function for the random orthogonal polynomial \eqref{ROS} with complex-valued iid Gaussian coefficients, where the $p_j$'s are associated to $w(x)$, satisfies
\begin{equation}\label{IOA}
\lim_{n\rightarrow \infty}h_n^P(z)= \frac{1}{4\pi \left(\emph{\text{Im}}(z)\right)^2}\left(1-\frac{\left(\emph{\text{Im}}(z)\right)^2\left|z+\sqrt{z^2-1}\right|^2}{|z^2-1|\left( \emph{\text{Im}}(z+\sqrt{z^2-1} \right)^2} \right),
\end{equation}
 for all $z\in \C\setminus[-1,1]$.  Furthermore, convergence in \eqref{IOA} holds uniformly on compact subsets of $\C\setminus [-1,1]$.
\end{theorem}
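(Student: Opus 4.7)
The plan is to apply Szeg\H{o}'s strong asymptotic formulas for $p_n(z)$ off the interval $[-1,1]$ directly to the simplified expression furnished by Theorem~\ref{OP}. Observe first that the term $1/(4\pi(\text{Im}(z))^2)$ already matches the leading factor of \eqref{IOA}, so everything reduces to computing the limit of
$$\frac{|p_{n+1}^{\prime}(z)p_n(z) - p_n^{\prime}(z)p_{n+1}(z)|^2}{4\pi\left(\text{Im}(p_{n+1}(z)p_n(\bar z))\right)^2}$$
as $n\to\infty$ and recognizing it as the subtracted quantity in \eqref{IOA}. Throughout, let $\phi(z) = z + \sqrt{z^2-1}$ denote the conformal map from $\C\setminus[-1,1]$ onto the exterior of the unit disk.

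Szeg\H{o}'s Theorems 12.1.1 and 12.1.2 in \cite{SZ}, applied to the weight $w$ with $w(\cos\theta)|\sin\theta|=f(\theta)\in G$, yield an asymptotic of the form $p_n(z) = \phi(z)^n\bigl(G(z) + \epsilon_n(z)\bigr)$, with $\epsilon_n(z) \to 0$ locally uniformly on $\C\setminus[-1,1]$ and $G$ a nonzero analytic function built from the Szeg\H{o} function of $f$. Since the $p_n$ have real coefficients, $G$ satisfies the Schwarz reflection $G(\bar z) = \overline{G(z)}$, and likewise $\phi(\bar z) = \overline{\phi(z)}$. To handle $p_n^{\prime}$, set $q_n(z) := p_n(z)/\phi(z)^n$, so that $q_n\to G$ locally uniformly; Cauchy's integral formula then gives $q_n^{\prime}\to G^{\prime}$ locally uniformly, and hence
$$p_n^{\prime}(z) = q_n^{\prime}(z)\phi(z)^n + nq_n(z)\phi(z)^{n-1}\phi^{\prime}(z) = n\phi(z)^{n-1}\phi^{\prime}(z)\bigl(G(z)+o(1)\bigr),$$
uniformly on compact subsets of $\C\setminus[-1,1]$ as $n\to\infty$.

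Next, I would substitute these expansions into the Wronskian-like numerator and observe a crucial telescoping: the $n$-terms cancel between $(n+1)$ and $n$ contributions, producing
$$p_{n+1}^{\prime}(z)p_n(z) - p_n^{\prime}(z)p_{n+1}(z) = G(z)^2\phi^{\prime}(z)\phi(z)^{2n}\bigl(1 + o(1)\bigr),$$
so its squared modulus is $\sim |G(z)|^4|\phi(z)|^{4n}|\phi^{\prime}(z)|^2$. For the denominator, $p_n(\bar z) = \overline{p_n(z)}$ together with the reflection properties yields
$$p_{n+1}(z)p_n(\bar z) = |G(z)|^2|\phi(z)|^{2n}\phi(z)\bigl(1+o(1)\bigr),$$
so $\text{Im}\bigl(p_{n+1}(z)p_n(\bar z)\bigr)\sim |G(z)|^2|\phi(z)|^{2n}\text{Im}(\phi(z))$. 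The bulky factors $|G|^4|\phi|^{4n}$ cancel between numerator and denominator, leaving the clean ratio $|\phi^{\prime}(z)|^2/(\text{Im}(\phi(z)))^2$. Finally, the identity $\phi^{\prime}(z) = \phi(z)/\sqrt{z^2-1}$ converts this to $|\phi(z)|^2/\bigl(|z^2-1|(\text{Im}(\phi(z)))^2\bigr)$; after factoring $1/(4\pi(\text{Im}(z))^2)$ out of both terms of $h_n^P$, one recovers exactly the right-hand side of \eqref{IOA}.

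The delicate step is the differentiated asymptotic together with the Wronskian cancellation: individually $p_n^{\prime}p_{n+1}$ and $p_{n+1}^{\prime}p_n$ are of order $n|\phi(z)|^{2n+1}$, but the difference sheds the factor of $n$ to produce only $G^2\phi^{\prime}\phi^{2n}$, which is precisely the order required to balance the $|\phi|^{4n}(\text{Im}(\phi))^2$ in the denominator. This near-miraculous collapse must be tracked with explicit $o(1)$ error bounds obtained from Cauchy's formula applied to $q_n$. Uniformity on compact subsets of $\C\setminus[-1,1]$---including the real points with $|x|>1$, where the naive quotient $(\text{Im}(z))^2/(\text{Im}(\phi(z)))^2$ is $0/0$ but extends continuously---is inherited directly from the uniformity in Szeg\H{o}'s theorem and is preserved through the finite algebraic manipulations above.
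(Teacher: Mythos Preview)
Your plan is correct and follows essentially the same route as the paper: start from the simplified intensity of Theorem~\ref{OP} (equivalently, the form $h_n^P=\frac{1}{4\pi(\mathrm{Im}\,z)^2}\bigl(1-|K_n(z,\bar z)|^2/K_n(z,z)^2\bigr)$), insert Szeg\H{o}'s exterior asymptotics, and let the common factor $|\phi|^{4n}|G|^4$ cancel. The only difference is packaging: the paper expands $p_n$ through Szeg\H{o}'s explicit representation in terms of the unit-circle polynomials $\phi_{2n}$ and tracks the derivative asymptotic by differentiating that formula, whereas you work directly with $q_n:=p_n/\phi^n\to G$ and invoke Cauchy's estimates to get $q_n'\to G'$; your route is shorter and avoids the bookkeeping of the $\phi_{2n}$ terms.

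One remark on the point you flag as delicate. Your displayed expression $p_n'(z)=n\phi^{n-1}\phi'(G+o(1))$ is true but, as you note, too lossy to feed into the Wronskian, since it would leave an uncontrolled $n\cdot o(1)$. The clean way (and the exact analogue of the paper's cancellation at the step where the $2(n{+}1)$ and $2n$ contributions collapse) is to keep $p_n=\phi^n q_n$ \emph{without} absorbing the lower-order part of $p_n'$, obtaining the identity
\[
p_{n+1}'p_n-p_n'p_{n+1}=\phi^{2n}\bigl[\phi\,(q_{n+1}'q_n-q_n'q_{n+1})+\phi'\,q_nq_{n+1}\bigr].
\]
Here $q_{n+1}'q_n-q_n'q_{n+1}\to G'G-G'G=0$ and $q_nq_{n+1}\to G^2$, giving exactly $\phi^{2n}\phi'(G^2+o(1))$ with no factor of $n$ ever appearing. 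With this in hand your conclusion and the uniformity on compact subsets follow as you describe.
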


\section{The Intensity Function $h_n$}

As mentioned, to prove Theorem \ref{thm2.1} we follow the method of proof given by Vanderbei in \cite{CZRS}.  Many of the parts of our proof of this theorem are nearly identical to that given by Vanderbei in \cite{CZRS}. The major differences in our proof are that we must consider the set $\{z\in \C: K_{n}(z,z)=0\}$, and since the random variables are complex valued, we will work with their real and imaginary parts, which in turn will make the covariance matrix we compute different than the one in the result by Vanderbei.  Taking these adjustments into account, for convenience of the reader we present the proofs of the needed  lemmas and the proof of the main theorem which gives the formula for the intensity function $h_n$.

Our first needed lemma is the following:
\begin{lemma}\label{lem3.1}
For each Jordan region $\Omega \in \C$ whose boundary intersects the set $\{z\in \C: K_{n}(z,z)=0\}$ at most only finitely many times, we have
\begin{equation}\label{F}
\E[N_n(\Omega)]=\frac{1}{2\pi i}\int_{\partial \Omega}F(z)\ dz,
\end{equation}
where
\begin{equation*}
F(z)=\E\left[ \frac{ P_n^{\prime}(z)}{ P_n(z)}\right]=\frac{K_{n}^{(0,1)}(\bar{z},\bar{z})}{K_{n}(z,z)}.
\end{equation*}
\end{lemma}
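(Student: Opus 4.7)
The plan is to prove the lemma in three stages: reduce $N_n(\Omega)$ to a contour integral via the argument principle, exchange expectation and contour integration by a Fubini argument, and finally evaluate $\E[P_n^{\prime}(z)/P_n(z)]$ pointwise using the joint complex Gaussian structure of $(P_n(z),P_n^{\prime}(z))$. The boundary hypothesis will enter in the argument-principle step, the nondegeneracy of the distribution of $P_n(z)$ will enter in the Fubini step, and the circular symmetry of the $\eta_j$ will drive the final evaluation.

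For the first stage, since $P_n$ is entire, on every realization with no zeros on $\partial\Omega$ the argument principle gives $N_n(\Omega)=\frac{1}{2\pi i}\int_{\partial\Omega} P_n^{\prime}(z)/P_n(z)\,dz$. To guarantee this identity almost surely, I would note that for each fixed $z$ with $K_{n}(z,z)\neq 0$ the complex Gaussian $P_n(z)$ has a nondegenerate two-dimensional distribution, so $\Pr(P_n(z)=0)=0$; integrating this against arc-length on $\partial\Omega$, which meets $\{K_n(z,z)=0\}$ at only finitely many points, and applying Fubini shows that a.s.\ the zeros of $P_n$ avoid $\partial\Omega$ entirely.

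For the second stage I would exchange expectation with the contour integral via Fubini, for which it suffices to bound $\int_{\partial\Omega}\E[|P_n^{\prime}(z)/P_n(z)|]\,|dz|$. At each $z$ with $K_{n}(z,z)>0$, the complex Gaussian $P_n(z)$ has density proportional to $\exp(-|w|^2/(2K_n(z,z)))$, so $\E[1/|P_n(z)|]$ is finite and behaves like $K_n(z,z)^{-1/2}$, while $\E[|P_n^{\prime}(z)|^2]=2K_n^{(1,1)}(z,z)$ is continuous on $\partial\Omega$; Cauchy--Schwarz then gives local integrability of $\E[|P_n^{\prime}/P_n|]$ away from the finite exceptional set, and the mildness of the $K_n(z,z)^{-1/2}$ singularity lets the line integral absorb the finitely many bad points. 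This Fubini justification is the main technical obstacle, since $1/|P_n(z)|$ has no pointwise bound as the random trajectory approaches zeros of $P_n$; only the nondegeneracy of the underlying two-dimensional Gaussian distribution keeps it $L^1$.

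For the third stage, I would compute $F(z)=\E[P_n^{\prime}(z)/P_n(z)]$ exploiting the circular symmetry of $\eta_j=\alpha_j+i\beta_j$: the standard relations $\E[\eta_j\overline{\eta_k}]=2\delta_{jk}$ and $\E[\eta_j\eta_k]=0$ yield that the jointly complex Gaussian vector $(P_n(z),P_n^{\prime}(z))$ has $\E[P_n(z)^2]=0$, $\E[P_n^{\prime}(z)P_n(z)]=0$, $\E[|P_n(z)|^2]=2K_n(z,z)$, and, after invoking the Schwarz reflection identities $\overline{f_j(z)}=f_j(\bar{z})$ and $\overline{f_j^{\prime}(\bar{z})}=f_j^{\prime}(z)$, $\E[P_n^{\prime}(z)\overline{P_n(z)}]=2K_{n}^{(0,1)}(\bar{z},\bar{z})$. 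Setting $c=K_{n}^{(0,1)}(\bar{z},\bar{z})/K_{n}(z,z)$ and $W=P_n^{\prime}(z)-cP_n(z)$, I find $\E[W\overline{P_n(z)}]=\E[WP_n(z)]=0$, so $W$ and $P_n(z)$ are uncorrelated in both the conjugate and nonconjugate senses; because they are jointly mean-zero complex Gaussian this forces actual independence, and therefore $\E[P_n^{\prime}(z)/P_n(z)]=c+\E[W]\,\E[1/P_n(z)]=c$, which is exactly the claimed formula for $F(z)$.
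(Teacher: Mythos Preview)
Your three-stage outline matches the paper's own structure (argument principle, Fubini, evaluate the expectation), but the execution differs in two places and one of them carries a genuine gap.

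\textbf{Stage 2 (Fubini).} As written, the Cauchy--Schwarz step fails: you have controlled $\E[1/|P_n(z)|]$ and $\E[|P_n'(z)|^2]$, but Cauchy--Schwarz applied to $\E[|P_n'/P_n|]$ requires $\E[1/|P_n(z)|^2]$, and for a nondegenerate two-dimensional Gaussian this second negative moment diverges logarithmically at the origin. The repair is already in your hands: use the decomposition $P_n'=cP_n+W$ from your Stage~3, so that $|P_n'/P_n|\le |c|+|W|/|P_n|$, and then independence of $W$ and $P_n(z)$ gives $\E[|W|/|P_n|]=\E|W|\cdot\E[1/|P_n|]<\infty$. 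Alternatively, H\"older with any exponent $q<2$ on $1/|P_n|$ (and $p>2$ on $|P_n'|$) works. The paper takes a completely different and much longer route here, restricted to the polynomial case: it writes $P_n'/P_n=\sum_k 1/(z-\zeta_k)$, changes variables from coefficients to roots via Vieta's formulas, and estimates each summand over the contour. Your probabilistic approach is cleaner and applies to general entire $f_j$ once the moment gap is patched.

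\textbf{Stage 3.} The paper defers this computation to a separate lemma and evaluates $F(z)$ by passing to real coordinates $(\xi_1,\xi_2,\xi_3,\xi_4)=(\mathrm{Re}\,P_n,\mathrm{Im}\,P_n,\mathrm{Re}\,P_n',\mathrm{Im}\,P_n')$, finding the Cholesky factor of the $4\times4$ covariance matrix, and then computing an auxiliary integral $\E[\zeta_1/(w\zeta_1+\zeta_2)]$ for independent standard normals. Your regression argument---orthogonalize $P_n'$ against $P_n$ in both the sesquilinear and bilinear senses, deduce independence from the circular symmetry $\E[\eta_j\eta_k]=0$, and use $\E[W]=0$---reaches the same $F(z)=K_n^{(0,1)}(\bar z,\bar z)/K_n(z,z)$ with far less computation, and it makes transparent exactly where the complex (as opposed to real) Gaussian hypothesis enters.
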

\begin{proof}
Using the argument principle in \cite{Ul} on page 79, we have an explicit formula for the random variable $N_n(\Omega)$ given by
$$N_n(\Omega)=\frac{1}{2\pi i}\int_{\partial \Omega} \frac{P_n^{\prime}(z)}{P_n(z)}dz. $$
If we now take the expectation and then interchange the expectation and the contour integral via the complex form of the Fubini-Tonelli Theorem (Theorem 8.8, p. 164 of \cite{RU}) (we give a sketch of the justification for this when the spanning functions $f_j$ are polynomials $p_j$, with $\deg p_j=j$ for all $j$, below) we obtain
$$\E[N_n(\Omega)]=\frac{1}{2\pi i}\int_{\partial \Omega} \E\left[ \frac{P_n^{\prime}(z)}{P_n(z)}\right]dz. $$
In the next lemma we show that for $z\not\in \{z:K_{n}(z,z)=0\}$, we have
$$\E\left[ \frac{ P_n^{\prime}(z)}{ P_n(z)}\right]=\frac{K_{n}^{(0,1)}(\bar{z},\bar{z})}{K_{n}(z,z)}.$$
Taking this into account and since we have supposed that $\partial\Omega$ intersects the set $\{z\in \C: K_{n}(z,z)=0\}$ at most only finitely many times, once the justification of the use of the Fubini-Tonelli Theorem is done, the proof will be complete.

We now provide a sketch of the proof for the justification of the interchange the expectation and the contour integral which has been deemed as ``tedious but doable" by many authors for the case that our applications are in.  That is, the case when the spanning functions $f_j$ are polynomials $p_j$ with $\deg p_j=j$ for all $j$.  Since $P_n$ and $P_n^{\prime}$ both depend on the random variables $\eta_0, \eta_1,\dots,\eta_n$, we write $P_n(z,\vec{\eta})$ and $P_n^{\prime}(z,\vec{\eta})$ where $\vec{\eta}:=(\eta_0,\dots,\eta_n)$.  We denote the joint density function for the random variables $\{\eta_j\}_{j=0}^n$ by $f(\vec{\eta})$. To use the Fubini-Tonelli Theorem we must show $\left|\frac{P_n^{\prime}(z,\vec{\eta})}{P_n(z,\vec{\eta})}\right|$ is measurable and that
\begin{equation}\label{bdd}
\int_{\R^{2(n+1)}}\int_{\partial \Omega}\left|\frac{P_n^{\prime}(z,\vec{\eta})}{P_n(z,\vec{\eta})}\right| \ |dz|\ f(\vec{\eta})\  dV <\infty.
\end{equation}

Observe that $\E[N_n(\Omega)]$ can be recovered from the values of $\E[N_n(D)]$, where $D:=\{z\in \C : |z|<R\}$.  Thus we will focus on proving \eqref{bdd} for the disk $D$.

We will first prove \eqref{bdd} for $P_n(z)=\sum_{j=0}^n\eta_jz^j$. Let $\{\zeta_k\}_{k=1}^n=\{\zeta_k(\vec{\eta})\}_{k=1}^n$ be the zeros of $P_n(z)$.  Then
$$\left|\frac{P_n^{\prime}(z)}{P_n(z)}\right|=\left|\sum_{k=1}^n\frac{1}{z-\zeta_k}   \right|\leq\sum_{k=1}^n\left|\frac{1}{z-\zeta_k}\right|.$$

Notice that if we can show that all the quotients $\left|\frac{1}{z-\zeta_k}\right|$, $k=1,\dots,n$, satisfy
\begin{equation}\label{bdd3}
\int_{\R^{2(n+1)}}\int_{\partial D}\left|\frac{1}{z-\zeta_k(\vec{\eta})}\right| \ |dz|\ f(\vec{\eta})\  dV <\infty,
\end{equation}
appealing to the linearity of the integration and the triangle inequality we will have \eqref{bdd}.

From Vieta's formulas we have an almost everywhere differentiable change of variables $\phi$ from the set of
roots to the set of coefficients.
For notational sake, we write $dV_{k}$ to stand for the volume measure in $\C^{k}$.  The integration will be done as follows:
\begin{align*}
&\int_{\C^{(n+1)}}\int_{\partial D}\left|\frac{1}{z-\zeta_k(\eta_n,\dots,\eta_n)}\right|\ |dz|\ f(\eta_0,\dots,\eta_n)\  dV_{n+1} \\
&=\int_{\C} \int_{\C^{n}}\int_{\partial D}\left|\frac{1}{z-\zeta_k(\eta_n,\dots,\eta_{n})}\right|\ |dz|\ f(\eta_0,\dots,\eta_{n-1})\  dV_n\ f(\eta_n) \ d\eta_n.
\end{align*}

Fixing the outer variable $\eta_n$, we have $\phi:\C^n\rightarrow \C^n$ with Jacobian determinant
$$|\eta_n|^{2n}\prod_{1\leq i<j\leq n}\left|\zeta_i-\zeta_j\right|^2:=J(\phi).$$

Let us first consider the case when an arbitrary zero $\zeta_k(\vec{\eta})$ of $P_n(z)$ lies on the contour $\partial D$.  Define
$A:=\{\text{coefficients of $P_n(z)$ :  $\zeta_k(\vec{\eta})\in \partial D$}\}.$  Since
$\phi(\{\text{roots of $P_n(z)$}:\zeta_k(\vec{\eta})\in \partial D\})=A$,
using the change of variables formula it follows that
\begin{align*}
V_n(A)&
=\int_{\{\text{roots of $P_n(z)$}:\zeta_k(\vec{\eta})\in \partial D\}} \phi(\zeta_k)J(\phi)\  dV_n
\leq \int_{\prod_{k=1}^n\partial D(0,R)} \phi(\zeta_k)J(\phi)\  dV_n
=0,
\end{align*}
where the last equality follows since $V_n\left(\prod_{k=1}^n\partial D(0,R)\right)=0$. Hence we have $V_n(A)=0$, and consequently $V_{n+1}(A)=0$.  Therefore the set $A$ is negligible for \eqref{bdd3} to hold, so that $|P_n^{\prime}(z)/P_n(z)|$ is measurable.

Let us now go to the case when the zeros of $P_n(z)$ do not lie on the contour $\partial D$ that is being integrated over.
When $|z-\zeta_k(\vec{\eta})|\geq 1$ for $z\in \partial D$, trivially we have $\left|\frac{1}{z-\zeta_k(\vec{\eta})}\right|\leq 1$, so that
\begin{align*}
\int_{\R^{2(n+1)}}\int_{\partial D}\left|\frac{1}{z-\zeta_k(\vec{\eta})}\right| \ |dz|\ f(\vec{\eta})\  dV
&\leq 2\pi R<\infty.
\end{align*}

When $0<|z-\zeta_k(\vec{\eta})|< 1$, carefully estimating the integrand it follows that
\begin{align*}
\int_{\partial D}\left|\frac{1}{z-\zeta_k(\vec{\eta})}\right| \ |dz|&
 \leq C\log \left(\frac{1}{|R-|\zeta_k||}\right),
\end{align*}
where $C>0$ is a constant that depends only on $R$.

Taking this into account and using the change of variables with the map $\phi$ yields
\begin{align*}
&C\int_{\R^{2(n+1)}} \log \left(\frac{1}{|R-|\zeta_k||}\right)\ f(\vec{\eta})\  dV \\
&=\frac{C}{\pi^{n+1}}\int_{\R^2}\int_{\R^{2n}} \log \left(\frac{1}{|R-|\zeta_k||}\right)\  f(\phi(\eta_0,\dots,\eta_{n-1}))|\eta_n|^{2n}\prod_{1\leq i<j\leq n}\left|\zeta_i-\zeta_j\right|^2\  dV_n \ f(\eta_n)\ dV_1.
\end{align*}
Using polar integration and further estimating, the above integral is less than or equal to the sum of
$$\frac{2K_1}{\pi^{n}}\int_{\R^{2(n+1)}} |\eta_n|^{2n}\prod_{1\leq i<j\leq n\atop i,j\neq k}\left|\zeta_i-\zeta_j\right|^2\prod_{i=1 \atop i\neq k}^n(R+1+|\zeta_i|)^2  \max_{\{\zeta_k\in \overline{D}\}}g(\zeta)  f(\eta_n)\ dV$$
and
$$\frac{4K_2}{\pi^{n}}\int_{\R^{2(n+1)}} |\eta_n|^{2n}\prod_{1\leq i<j\leq n\atop i,j\neq k}\left|\zeta_i-\zeta_j\right|^2\prod_{i=1 \atop i\neq k}^n(R+1+|\zeta_i|)^2  \max_{\{\zeta_k\in \overline{D(0,R+1)}\setminus D \}}g(\zeta)  f(\eta_n)\ dV,$$
where $g(\zeta)=f(\phi(\eta_0,\dots,\eta_{n-1}))$, and $K_1$ and $K_2$ are constants that depend only on $R$.  Notice that both
$$\max_{\{\zeta_k\in \overline{D}\}}g(\zeta) \ \ \ \text{and} \ \ \ \max_{\{\zeta_k\in \overline{D(0,R+1)}\setminus D \}}g(\zeta)$$
decay at infinity exponentially with each $|\zeta_i|\rightarrow \infty$, $i=1,\dots,n$.  Hence the above two integrals are convergent. Therefore when  the zero $\zeta_k$ is not on the contour, the bound \eqref{bdd3} and consequently the bound \eqref{bdd} holds true.

Now assume that $P_n(z)=\sum_{j=0}^n\nu_j p_j(z)$,
where the $\nu_j$'s are complex Gaussian random variables and the $p_j$'s are polynomials such that
$p_j(z)=\sum_{k=0}^ja_{j,k}z^k$
with $a_{j,k}\in \C$ for $j=0,1,\dots, n$ and $k=0,1\dots,j$.
Making the following change of variables
\begin{align*}
\eta_n&=\nu_na_{n,n}\\
\eta_{n-1}&=\nu_na_{n,n-1}+\nu_{n-1}a_{n-1,n-1}\\
\eta_{n-2}&=\nu_na_{n,n-2}+\nu_{n-1}a_{n-1,n-2}+\nu_{n-2}a_{n-2,n-2} \\
&\ \ \vdots \\
\eta_0&=\nu_na_{n,0}+\nu_{n-1}a_{n-1,0}+\cdots+\nu_0a_{0,0},
\end{align*}
and calling the map which makes this change of variables $\psi$, we see that the Jacobian determinant for this change of variables is $|\det D\psi|^2=\prod_{j=0}^n|a_{j,j}|^2$.

Setting $\vec{\nu}=(\nu_n,\dots, \nu_0)$, so that
$\psi(\vec{\nu})
=(\eta_n,\dots,\eta_0)
=\vec{\eta}$, and using the change of variables formula we have
\begin{align*}
 \int_{\psi(\R^{2(n+1)})}\int_{\partial \Omega}\left|\frac{P_n^{\prime}(z,\vec{\nu})}{P_n(z,\vec{\nu})}\right| \ |dz|\ f(\vec{\nu})\  &dV \\ &=\int_{\R^{2(n+1)}}\int_{\partial \Omega}\left|\frac{P_n^{\prime}(z,\psi(\vec{\nu}))}{P_n(z,\psi(\vec{\nu}))}\right| \ |dz|\ f(\psi(\vec{\nu}))\ \prod_{j=0}^n|a_{j,j}|^2\ dV \\
&=\prod_{j=0}^n|a_{j,j}|^2\int_{\R^{2(n+1)}}\int_{\partial \Omega}\left|\frac{P_n^{\prime}(z,\vec{\eta})}{P_n(z,\vec{\eta})}\right| \ |dz|\ f(\vec{\eta})\ \ dV<\infty,
\end{align*}
since $\prod_{j=0}^n|a_{j,j}|^2<\infty$, and by our previous calculations.

Therefore by the above equality being finite, we are justified in using the complex version of the Fubini-Tonelli Theorem to exchange the expectation and the contour integral when
$P_n(z)=\sum_{j=0}^n\nu_j p_j(z)$.
\end{proof}

To prove Theorem \ref{thm2.1} we need one more lemma.
\begin{lemma}\label{lem3.2}
Let $F$ denote the function defined by \eqref{F}.  For $z\not\in \{z:K_{n}(z,z)=0\}$, we have
$$F(z)=\frac{K_{n}^{(0,1)}(\bar{z},\bar{z})}{K_{n}(z,z)}.$$
\end{lemma}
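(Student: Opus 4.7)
The plan is to reduce the computation to a direct calculation using the joint real Gaussian structure of $P_n(z)$ and $P_n'(z)$ at the fixed point $z$. Write $P_n(z)=X_1+iX_2$ and $P_n'(z)=Y_1+iY_2$; since each $\eta_j=\alpha_j+i\beta_j$ with $\alpha_j,\beta_j$ iid real standard normals, the vector $(X_1,X_2,Y_1,Y_2)$ is centered real Gaussian. First I would do a short bookkeeping computation (writing $f_j(z)=a_j+ib_j$ and $f_j'(z)=c_j+id_j$) to determine its covariance matrix and, invoking the Schwarz reflection identities from the excerpt, identify it in terms of the kernels: the covariance matrix of $(X_1,X_2)$ is $K_n(z,z)\,I_2$, while the cross-covariances satisfy $\E[X_1Y_1]=\E[X_2Y_2]=\operatorname{Re}K_n^{(0,1)}(\bar z,\bar z)$ and $\E[X_1Y_2]=-\E[X_2Y_1]=\operatorname{Im}K_n^{(0,1)}(\bar z,\bar z)$.

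Next, since $K_n(z,z)>0$ the covariance of $(X_1,X_2)$ is invertible, so I would apply the standard regression decomposition for jointly Gaussian vectors to write
$$Y_k=\frac{\E[X_1 Y_k]}{K_n(z,z)}\,X_1+\frac{\E[X_2 Y_k]}{K_n(z,z)}\,X_2+Z_k,\qquad k=1,2,$$
with $Z_1,Z_2$ centered Gaussians independent of $(X_1,X_2)$. Rewriting the ratio as
$$\frac{P_n'(z)}{P_n(z)}=\frac{P_n'(z)\,\overline{P_n(z)}}{|P_n(z)|^2}=\frac{Y_1X_1+Y_2X_2}{X_1^2+X_2^2}+i\,\frac{Y_2X_1-Y_1X_2}{X_1^2+X_2^2}$$
and substituting, the antisymmetry $\E[X_2Y_1]=-\E[X_1Y_2]$ will make the off-diagonal $X_1X_2$ terms cancel, and the equality $\E[X_1Y_1]=\E[X_2Y_2]$ will collapse the diagonal coefficients into a common multiple of $X_1^2+X_2^2$. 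What remains is a deterministic ratio plus residuals of the form $(Z_1X_1+Z_2X_2)/(X_1^2+X_2^2)$ and $(Z_2X_1-Z_1X_2)/(X_1^2+X_2^2)$.

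To finish I would check that each residual term has zero expectation. The two-dimensional Gaussian $(X_1,X_2)$ is rotationally symmetric with variance $K_n(z,z)>0$, so $\E[|X_j|/(X_1^2+X_2^2)]<\infty$ (a routine polar-coordinate check). Combined with the independence of $Z_k$ from $(X_1,X_2)$ and $\E[Z_k]=0$, Fubini yields $\E[Z_kX_j/(X_1^2+X_2^2)]=0$, and taking expectations on both sides gives
$$F(z)=\frac{\E[X_1Y_1]+i\,\E[X_1Y_2]}{K_n(z,z)}=\frac{K_n^{(0,1)}(\bar z,\bar z)}{K_n(z,z)}.$$

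The main obstacle will be the algebraic identification of the cross-covariances with $K_n^{(0,1)}(\bar z,\bar z)$ rather than $K_n^{(0,1)}(z,z)$: a sign or conjugation slip in the Schwarz reflection step would land the wrong kernel on the right. Everything else---the regression identity, the integrability estimate, and the vanishing of the $Z$-residuals---is routine once the covariance matrix has been assembled correctly.
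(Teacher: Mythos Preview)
Your proposal is correct and the covariance identifications match the paper's exactly, but your route to the expectation of the ratio is genuinely different from the paper's. The paper follows Vanderbei: after computing the covariance matrix of $(\xi_1,\xi_2,\xi_3,\xi_4)$ it performs a Cholesky factorization $\xi\,{\buildrel d \over =}\,L\zeta$, reduces $\E[P_n'/P_n]$ to the two-dimensional integral $f(w)=\E[\zeta_1/(w\zeta_1+\zeta_2)]$, evaluates this integral explicitly in polar coordinates (with a case split on the sign of $\operatorname{Im}w$), and then substitutes the Cholesky entries back in. Your argument instead uses the Gaussian regression decomposition $Y_k=\Sigma_{YX}\Sigma_{XX}^{-1}X+Z_k$ with $Z$ independent of $X$; the special circulant structure of the cross-covariances (which both proofs identify) then makes the deterministic piece collapse to the desired ratio, and the residual expectations vanish by independence plus the elementary integrability bound $\E[|X_j|/(X_1^2+X_2^2)]<\infty$. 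Your approach buys a shorter and more conceptual computation, sidestepping the explicit integral and the sign analysis for $\gamma/\delta$ and $\delta/\gamma$; the paper's approach, while longer, keeps closer to the Shepp--Vanderbei template and produces the Cholesky data that would be needed if one wanted higher moments or the full conditional law rather than just the mean.
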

\begin{proof}
Observe that $P_n(z)=\sum_{j=0}^{n}\eta_jf_j(z)$ and $P_n^{'}(z)=\sum_{j=0}^{n}\eta_jf_j{'}(z)$, with $\eta_j=\alpha_j+i\beta_j$, are both complex Gaussian random variables.  We will work their real and imaginary parts
$$\text{Re}\ P_n(z)=\sum_{j=0}^n (\alpha_ja_j-\beta_jb_j):=\xi_1, \ \ \ \text{Im}\ P_n(z)=\sum_{j=0}^n (\alpha_jb_j+\beta_ja_j):=\xi_2, $$
$$\text{Re}\ P_n^{\prime}(z)=\sum_{j=0}^n (\alpha_jc_j-\beta_jd_j):=\xi_3, \ \ \ \text{Im}\ P_n^{\prime}(z)=\sum_{j=0}^n (\alpha_jd_j+\beta_jc_j):=\xi_4, $$
where
$$a_j=\text{Re } f_j(z),\ \ \ b_j=\text{Im } f_j(z),\ \ \ c_j=\text{Re } f_j^{'}(z),\ \ \ d_j=\text{Im } f_j^{'}(z).$$

Following Vanderbei's method proof, we will now be forming the covariance matrix of the vector
$\xi:=(\xi_1, \xi_2, \xi_3, \xi_4 )^T.$  Before doing so, observe that since the random variables $\{\alpha_j\}_{j=0}^n$ and $\{\beta_j\}_{j=0}^n$ are independent identically distributed $N(0,1)$, for $j=0,1,\dots,n$ we have that  $\E[\alpha_j]=\E[\beta_j]=0$ and  $\E[\alpha_j^2]=\E[\beta_j^2]=1$. Thus by the expectation being linear, it follows that $\E[\xi_1]=\E[\xi_2]=\E[\xi_3]=\E[\xi_4]=0$.  Consequently each entry in the covariance matrix for $\xi$ is of the form $\E[(\xi_i-\E[\xi_i])(\xi_k-\E[\xi_k])]=\E[\xi_i \xi_k]$, where $i,k=1,\dots, 4$.
Using these observations, by definition of the covariance matrix  we see that
\begin{align}\label{CV}
\text{Cov}[\xi]=\E \left[\xi \xi^T\right]
=\begin{bmatrix}
\E[\xi_1^2]     & \E[\xi_1\xi_2]   & \E[\xi_1\xi_3]    & \E[\xi_1\xi_4] \\
\E[\xi_2 \xi_1]     & \E[\xi_2^2] & \E[\xi_2 \xi_3]  & \E[\xi_2 \xi_4] \\
\E[\xi_3 \xi_1]     & \E[\xi_3 \xi_2] & \E[\xi_3^2]  & \E[\xi_3 \xi_4]\\
\E[\xi_4 \xi_1]      & \E[\xi_4 \xi_2]   & \E[\xi_4 \xi_3]  & \E[\xi_4^2]
\end{bmatrix}.
\end{align}

We now represent the correlated Gaussian random variables $\xi_1,\xi_2, \xi_3,\xi_4$ in terms of four independent standard normals by finding a lower triangular matrix $L=[l_{pq}]$, $p, q=1,2,3,4$, with the property that  $\xi\,{\buildrel d \over =}\,L\zeta$, where the notation ${\buildrel d \over =}$ denotes equality in distribution, with $\zeta=(\zeta_1,\zeta_2,\zeta_3,\zeta_4)^T$ being a vector of four independent standard normal random variables.  Since
\begin{equation}\label{CV1}
\text{Cov}[\xi]=\E\left[\xi \xi^T\right]=\E\left[L\zeta \zeta^T L^T\right]=LL^T,
\end{equation}
we see that $L$ is the Cholesky factor for the covariance matrix.

By $\xi\,{\buildrel d \over =}\,L\zeta$, and the fact that $L$ is lower triangular, we have
\begin{align*}
\frac{P_n'(z)}{P_n(z)}&=\frac{\xi_3+i\xi_4}{\xi_1+i\xi_2}
\,{\buildrel d \over =}\,\frac{(l_{31}+il_{41})\zeta_1+(l_{32}+il_{42})\zeta_2+(l_{33}+il_{43})\zeta_3+il_{44}\zeta_4}{(l_{11}+il_{21})\zeta_1+il_{22}\zeta_2}.
\end{align*}

So with $\alpha=l_{31}+il_{41},\beta=l_{32}+il_{42},\gamma=l_{11}+il_{21}$, and $\delta=il_{22}$, using the independence of the $\zeta_i$'s, it follows that
$$
\E\left[\frac{P_n'(z)}{P_n(z)}\right]=\E \left[ \frac{\alpha \zeta_1+\beta\zeta_2}{\gamma\zeta_1+\delta\zeta_2}\right].
$$
If we now split up the numerator of the above and use the property that $\zeta_1$ and $\zeta_2$ are exchangeable, we can write the expectation as
$$
F(z)=\E\left[\frac{P_n'(z)}{P_n(z)}\right]=\frac{\alpha}{\delta}f(\gamma /\delta)+\frac{\beta}{\gamma}f(\delta/\gamma),
$$
where $f:\C\setminus \R \rightarrow \C$ by $f(w)=\E\left[\frac{\zeta_1}{w\zeta_1+\zeta_2}\right].$
Using the definition of the expectation in our Gaussian setting, and appealing to polar integration we see that
\begin{align*}
f(w)&=\E\left[\frac{\zeta_1}{w\zeta_1+\zeta_2}\right]\\
&=\frac{1}{2\pi}\int_0^{2\pi}\int_0^{\infty}\frac{\rho \cos \theta}{w\rho \cos \theta+ \rho \sin \theta}e^{-\rho^2/2}\rho d\rho d\theta \\
&=\frac{1}{2\pi}\int_0^{2\pi}\frac{d\theta}{w+\tan \theta}\\
&=\begin{cases}
\frac{1}{w+i} \hfill &\text{ if Im}(w)>0,\\
\frac{1}{w-i} \hfill &\text{ if Im}(w)<0.
\end{cases}
\end{align*}
We need to evaluate $f$ at $\gamma / \delta$ and $\delta / \gamma$.  Since in general $l_{11}$ and $l_{22}$ are nonnegative, we have that $\gamma/ \delta=l_{21} / l_{22}-i l_{11}/ l_{22}$ has negative imaginary part while $\delta/ \gamma=l_{21}l_{22}/ (l_{11}^2+l_{21}^2)+il_{11}l_{22}/(l_{11}^2+l_{21}^2)$ has positive imaginary part. Thus
\begin{align}\label{3.2.1}
F(z)&=\frac{\alpha}{\delta}f(\gamma /\delta)+\frac{\beta}{\gamma}f(\delta/\gamma)
=\frac{\alpha}{\delta}\frac{1}{\frac{\gamma}{\delta}-i}+\frac{\beta}{\gamma}\frac{1}{\frac{\delta}{\gamma}+i}
=\frac{i\alpha+\beta}{i\gamma +\delta}
=\frac{l_{32}-l_{41}+i(l_{31}+l_{42})}{-l_{21}+i(l_{11}+l_{22})}.
\end{align}
From the above we see that we need explicit formulas for the elements of the Cholesky factor $L$. Using \eqref{CV} and \eqref{CV1}  we obtain
\begin{align*}
&\E[\xi_1^2]=l_{11}^2,\ \  \ \ \E[\xi_2 \xi_1]=l_{21}l_{11},\ \ \ \ \E[\xi_3 \xi_1]=l_{31}l_{11},\ \ \ \ \E[\xi_4 \xi_1]=l_{41}l_{11}, \\
& \E[\xi_2^2]=l_{21}^2+l_{22}^2, \ \ \ \ \E[\xi_3 \xi_2]=l_{31}l_{21}+l_{32}l_{22},\ \ \ \ \E[\xi_4 \xi_2]=l_{41}l_{21}+l_{42}l_{22}.
\end{align*}
To solve for the above entries, we will first find what expectations above are.  To this end, recall that the random variables $\{\alpha_j\}_{j=0}^n$ and $\{\beta_j\}_{j=0}^n$ are independent identically distributed $N(0,1)$; consequently  $\E[\omega \nu]=\E[\omega]\E[\nu]$ for $\omega$ and $\nu$ being any two distinct elements among the $\alpha_j$'s and the $\beta_j$'s. Using these properties and the definitions of the sums $K_{n}(z,z)$ and $K_{n}^{(0,1)}(\bar{z},\bar{z})$ yields
\begin{align*}
\E[\xi_1^2]&=\sum_{j=0}^n(a_j^2+b_j^2)=\sum_{j=0}^n|f_j(z)|^2=K_{n}(z,z), \\
\E[\xi_2 \xi_1]&=\sum_{j=0}^n(a_jb_j-a_jb_j)=0,\\
\E[\xi_3 \xi_1]& =\sum_{j=0}^n(a_jc_j+b_jd_j)=\frac{K_{n}^{(0,1)}(\bar{z},\bar{z})
+\overline{K_{n}^{(0,1)}(\bar{z},\bar{z})}}{2}
=\text{Re}(K_{n}^{(0,1)}(\bar{z},\bar{z})), \\
\E[\xi_4\xi_1]&=\sum_{j=0}^n(a_jd_j-b_jc_j)=\frac{K_{n}^{(0,1)}(\bar{z},\bar{z})
-\overline{K_{n}^{(0,1)}(\bar{z},\bar{z})}}{2i}
=\text{Im}(K_{n}^{(0,1)}(\bar{z},\bar{z})), \\
\E[\xi_2^2]&=\sum_{j=0}^n(b_j^2+a_j^2)=\sum_{j=0}^n|f_j(z)|^2
=K_{n}(z,z),\\
\E[\xi_3 \xi_2]&=\sum_{j=0}^n(b_jc_j-a_jd_j)=-\text{Im}(K_{n}^{(0,1)}(\bar{z},\bar{z})),\\
\text{and}& \\
\E[\xi_4 \xi_2]&=\sum_{j=0}^n(b_jd_j+a_jc_j)
=\text{Re}(K_{n}^{(0,1)}(\bar{z},\bar{z})).
\end{align*}

Using the above equalities and solving for the needed entries of the Cholesky factor $L$ it follows that
\begin{align*}
&l_{11}=\sqrt{K_{n}(z,z)},\ \ \ \ l_{21}=0,\ \ \ \ l_{31}=\frac{\text{Re}(K_{n}^{(0,1)}(\bar{z},\bar{z}))}{\sqrt{K_{n}(z,z)}},
\ \ \ \ l_{41}=\frac{\text{Im}(K_{n}^{(0,1)}(\bar{z},\bar{z}))}{\sqrt{K_{n}(z,z)}}\\
&l_{22}=\sqrt{K_{n}(z,z)},\ \ \ \ l_{32}=\frac{-\text{Im}(K_{n}^{(0,1)}(\bar{z},\bar{z}))}{\sqrt{K_{n}(z,z)}},
\ \ \ \  l_{42}=\frac{\text{Re}(K_{n}^{(0,1)}(\bar{z},\bar{z}))}{\sqrt{K_{n}(z,z)}}\ .
\end{align*}

Therefore substituting these expressions into \eqref{3.2.1} and simplifying gives
\begin{align*}
F(z)&=\frac{l_{32}-l_{41}+i(l_{31}+l_{42})}{-l_{21}+i(l_{11}+l_{22})}\\
&=\frac{\frac{-\text{Im}(K_{n}^{(0,1)}(\bar{z},\bar{z}))}{\sqrt{K_{n}(z,z)}}
-\frac{\text{Im}(K_{n}^{(0,1)}(\bar{z},\bar{z}))}{\sqrt{K_{n}(z,z)}}+
i\left(\frac{\text{Re}(K_{n}^{(0,1)}(\bar{z},\bar{z}))}{\sqrt{K_{n}(z,z)}}+ \frac{\text{Re}(K_{n}^{(0,1)}(\bar{z},\bar{z}))}{\sqrt{K_{n}(z,z)}} \right)}{0+i\left(\sqrt{K_{n}(z,z)}+\sqrt{K_{n}(z,z)}  \right)} \\
&=\frac{\text{Re}(K_{n}^{(0,1)}(\bar{z},\bar{z}))+i\text{Im}(K_{n}^{(0,1)}(\bar{z},\bar{z}))}{K_{n}^{(0,1)}(\bar{z},\bar{z})}\\
&=\frac{K_{n}^{(0,1)}(\bar{z},\bar{z})}{K_{n}(z,z)}.
\end{align*}
\end{proof}

We now give the proof of Theorem \ref{thm2.1}.
\begin{proof}[Proof of Theorem \ref{thm2.1}]
Let us first observe that since $K_n(z,z)$ is the sum of the modulus squared of entire functions $f_j(z)$, $j=0,1,\dots, n$, we have $K_n(z,z)=0$  if and only if all the $f_j$'s vanish at a point.  Furthermore, since on every bounded region $K_n(z,z)$ can have only finitely many zeros, we can therefore avoid those zeros.  Thus to prove the theorem it suffices to consider a bounded region $\Omega$ such that $K_{n}(z,z)\neq 0$ for all $z\in \Omega$.    Setting $R=\{z\in \C: K_{n}(z,z)=0\}$, for a region $\Omega$ with $\Omega \cap R=\emptyset$, by Green's Theorem we have
$$\E[N_n(\Omega)]=\frac{1}{2\pi i}\int_{\partial \Omega}F(z)\ dz
=\frac{1}{\pi }\int_{\Omega} \frac{\partial F(z,\overline{z})}{\partial\overline{z}} \, dx\,dy,
$$
where are writing $F(z,\bar{z})$ to emphasize that $F$ is a function of both $z$ and $\bar{z}$.

Let the symbol $\overline{\partial}$ denote partial derivatives with respect to $\overline{z}$.
Our goal is to simplify $\frac{1}{\pi}\overline{\partial}F(z,\bar{z})$ to $h_n(z)$, where by Lemma \ref{lem3.2},
$$
F(z,\bar{z})=\frac{K_{n}^{(0,1)}(\bar{z},\bar{z})}{K_{n}(z,z)}.
$$
Using the Quotient Rule we see that
\begin{align*}
\overline{\partial} F(z,\bar{z})&= \frac{\overline{\partial}K_{n}^{(0,1)}(\bar{z},\bar{z})K_{n}(z,z)-K_{n}^{(0,1)}(\bar{z},\bar{z})\overline{\partial}K_{n}(z,z)}
{\left(K_{n}(z,z)\right)^2} \\
&=\frac{K_{n}^{(1,1)}(z,z)K_{n}(z,z)-K_{n}^{(0,1)}(\bar{z},\bar{z})K_{n}^{(0,1)}(z,z)}
{\left(K_{n}(z,z)\right)^2}\\
&=\frac{K_{n}^{(1,1)}(z,z)K_{n}(z,z)-|K_{n}^{(0,1)}(z,z)|^2}{\left(K_{n}(z,z)\right)^2}.
\end{align*}

Therefore
$$h_n(z)=\frac{1}{\pi}\ \overline{\partial} F(z,\bar{z})=\frac{K_{n}^{(1,1)}(z,z)K_{n}(z,z)-|K_{n}^{(0,1)}(z,z)|^2}{\pi \left(K_{n}(z,z)\right)^2}.$$
\end{proof}

\section{Random Polynomials spanned by OPRL}

In this section we derive the intensity function $h_n^{P}(z)$  for the random orthogonal polynomial \eqref{ROS}.
Since the polynomials $p_j(z)$, $j=0,1,\dots,n$,  are orthogonal on the real line, we can simplify the kernels $K_{n}(z,z)$, $K_{n}^{(0,1)}(z,z)$, and $K_{n}^{(1,1)}(z,z)$ which make up the intensity function $h_n^{P}$ using the Christoffel-Darboux formula.  For convenience of the reader, the Christoffel-Darboux formula  (p. 43 of \cite{SZ}) says that for $z,w\in \C$ and $p_n(z)$, $n=0,1,\dots$,  polynomials that are orthogonal on the real line, and $k_n$ be the leading coefficient of $p_n(z)$, we have
\begin{equation}\label{CD1}
\sum_{j=0}^{n}p_j(z)p_j(w)=\frac{k_{n}}{k_{n+1}}\cdot \frac{p_{n+1}(z)p_{n}(w)-p_{n}(z)p_{n+1}(w)}{z-w}, \ \  z\neq w.
\end{equation}
Furthermore, taking $z=w$ gives
\begin{equation}\label{CD2}
\sum_{j=0}^{n}\left(p_j(z)\right)^2=\frac{k_{n}}{k_{n+1}}\cdot (p_{n+1}^{\prime}(z)p_{n}(z)-p_{n}^{\prime}(z)p_{n+1}(z)).
\end{equation}

Before obtaining our representations of the kernels, let us note that since the polynomials $p_j(z)$, $j=0,1,\dots,n$,  are orthogonal on the real line, they consequently have real coefficients.  Thus when using conjugation we have that $\overline{p_j(z)}=p_j(\bar{z})$ for all $j=0,1,\dots,n $ and all $z\in \C$.

Using the Christoffel-Darboux formula to get a representation for $K_{n}(z,z)$, we take $w=\bar{z}$ in \eqref{CD1} to achieve
\begin{align}\nonumber
K_{n}(z,z)&=\sum_{j=0}^{n}p_j(z)\overline{p_j(z)} \\
&=\frac{k_{n}}{k_{n+1}}\cdot \frac{p_{n+1}(z)p_{n}(\bar{z})-p_{n}(z)p_{n+1}(\bar{z})}{2i\text{Im}(z)}  \label{CDB01} \\
&=\frac{k_{n}}{k_{n+1}}\cdot \frac{\text{Im}(p_{n+1}(z)p_{n}(\bar{z}))}{\text{Im}(z)}. \label{CDB02}
\end{align}

For our representation of $K_{n}^{(0,1)}(z,z)$, we first take the derivative of \eqref{CD1} with respect to $w$.  This gives
\begin{align}\label{CDw}
\sum_{j=0}^{n}p_j(z)p_j^{\prime}(w)&=\frac{k_{n}}{k_{n+1}} \left(\frac{p_{n+1}(z)p_{n}^{\prime}(w)-p_{n}(z)p_{n+1}^{\prime}(w)}{z-w}
   +\frac{p_{n+1}(z)p_{n}(w)-p_{n}(z)p_{n+1}(w) }{(z-w)^2}\right).
\end{align}
Setting $w=\bar{z}$ in the above, and using our representation of $K_{n}(z,z)$ at \eqref{CDB01} we then have
\begin{align}\label{CDB1}
K_{n}^{(0,1)}(z,z)&=\sum_{j=0}^{n} p_j(z)\overline{p_j^{\prime}(z)}
=\frac{k_{n}}{k_{n+1}} \Bigg[\frac{p_{n+1}(z)p_{n}^{\prime}(\bar{z})-p_{n}(z)p_{n+1}^{\prime}(\bar{z})}{2i\text{Im}(z)}\Bigg]+\frac{K_{n}(z,z) }{2i\text{Im}(z)}.
\end{align}

Since
$$\overline{K_{n}^{(0,1)}(z,z)}=\overline{\sum_{j=0}^{n} p_j(z)\overline{p_j^{\prime}(z)}}=\sum_{j=0}^{n} \overline{p_j(z)}p_j^{\prime}(z)=\sum_{j=0}^{n} p_j(\bar{z})p_j^{\prime}(z)=K_{n}^{(0,1)}(\bar{z},\bar{z}),$$
taking the conjugate of \eqref{CDB1} yields
\begin{align}
K_{n}^{(0,1)}(\bar{z},\bar{z})=\frac{k_{n}}{k_{n+1}} \Bigg[\frac{p_{n}(\bar{z})p_{n+1}^{\prime}(z)-p_{n+1}(\bar{z})p_{n}^{\prime}(z)}{2i\text{Im}(z)}\Bigg]-\frac{K_{n}(z,z) }{2i\text{Im}(z)}. \label{CDB11}
\end{align}

To obtain our representation for $K_{n}^{(1,1)}(z,z)$, we differentiate \eqref{CDw} with respect to $z$, then
taking $w=\bar{z}$, and recalling our representation for $K_{n}(z, z)$ at \eqref{CDB01} and then that of $K_{n}^{(0,1)}(z,z)$ from \eqref{CDB1} and $K_{n}^{(0,1)}(\bar{z},\bar{z})$ from \eqref{CDB11}, we see that
\begin{align}\label{CDB2}
K_{n}^{(1,1)}(z,z)&=\sum_{j=0}^{n}p_j^{\prime}(z)\overline{p_j^{\prime}(z)}
=\frac{K_{n}^{(0,1)}(\bar{z},\bar{z})}{2i\text{Im}(z)}-\frac{K_{n}^{(0,1)}(z,z)}{2i\text{Im}(z)}+ \frac{k_{n}}{k_{n+1}}\cdot\frac{\text{Im}(p_{n+1}^{\prime}(z)p_{n}^{\prime}(\bar{z}))}{\text{Im}(z)}.
\end{align}

For our representation of $K_{n}(z,\bar{z})$ we simply use \eqref{CD2} to achieve
\begin{align}\label{CDA0}
K_{n}(z,\bar{z})&=\sum_{j=0}^{n}p_j(z)\overline{p_j(\bar{z})}=\sum_{j=0}^{n}p_j(z)p_j(z)
=\frac{k_{n}}{k_{n+1}}\left(p_{n+1}^{\prime}(z)p_{n}(z)-p_{n}^{\prime}(z)p_{n+1}(z)\right).
\end{align}

Using our derived expressions \eqref{CDB01}, \eqref{CDB1}, \eqref{CDB11}, \eqref{CDB2}, and \eqref{CDA0}, the numerator of the intensity function from Theorem \ref{thm2.1} simplifies as
\begin{align}
\nonumber
K_{n}^{(1,1)}&(z,z)K_{n}(z,z)-|K_{n}^{(0,1)}(z,z)|^2
=\frac{\left(K_{n}(z,z) \right)^2-\left| K_{n}(z,\bar{z}) \right|^2}{4\left(\text{Im}(z)\right)^2}.
\end{align}

Therefore, using the expression for the numerator above and recalling the relations \eqref{CDB02} and \eqref{CDA0}, we see that
\begin{align}
\nonumber
h_n^P(z)&=\frac{K_{n}^{(1,1)}(z,z)K_{n}(z,z)-|K_{n}^{(0,1)}(z,z)|^2}{\pi  \left(K_{n}(z,z)\right)^2} \\
\label{KNN}
&=\frac{1}{4\pi\left(\text{Im}(z)\right)^2}\left(1-\frac{\left|K_{n}(z,\bar{z})\right|^2}{\left(K_{n}(z,z)\right)^2}\right)\\
\nonumber
&=\frac{1}{4\pi\left(\text{Im}(z)\right)^2}-\frac{|p_{n+1}^{\prime}(z)p_n(z)-p_n^{\prime}(z)p_{n+1}(z)|^2}{4\pi \left(\text{Im}(p_{n+1}(z)p_n(\bar{z})  \right)^2},
\end{align}
which gives the result of Theorem \ref{OP}.

\section{The Limiting Intensity Function for OPRL associated to the Szeg\H{o} Class}

To prove Theorem \ref{GOP2}, by \eqref{KNN} we see that it suffices to find asymptotics for the kernels $K_{n}(z,z)$ and $K_{n}(z,\bar{z})$.

Since $w(x)$ is a weight function on the interval $-1\leq x\leq 1$ such that $w(\cos \theta)|\sin \theta |=f(\theta)$ belongs to the weight class $G$, it is know that associated to the function $f(\theta)$ is a uniquely determined function
$$D(f;\xi)=D(\xi)=\exp\left\{ \frac{1}{4\pi}\int_{-\pi}^{\pi}\log f(t) \ \frac{1+\xi e^{-it}}{1-\xi e^{-it}}\ dt \right\},$$
that is analytic and nonzero for $|\xi|<1$ with $D(0)>0$.  Using Szeg\H{o}'s Theorem 12.1.2 on page 297 of \cite{SZ},  for $D(\xi)$ as above, we have that the orthogonal polynomials $\{p_n(z)\}$ associated with $w(x)$ have the asymptotic formula
\begin{equation}\label{GOP1}
p_n(z)\sim \frac{\xi^n}{\sqrt{2\pi}D(\xi^{-1})},
\end{equation}
where $z=\frac{1}{2}(\xi+\xi^{-1})\in \C\setminus [-1,1]$, with $|\xi|>1$.  Moreover, \eqref{GOP1} holds uniformly for $|\xi|\geq R>1$.

Let $\{\phi_{2n}(\xi)\}$ be a subsequence of $\{\phi_n(\xi)\}$ the orthonormal set associated with $f(\theta)$ on $\xi=e^{i\theta}$, and let $\kappa_{2n}$ be the leading coefficient of $\phi_{2n}$. Then the asymptotic \eqref{GOP1} is a consequence of the following results:  equation (11.5.2) on page 294 in \cite{SZ} that gives
\begin{align}\label{pz}
p_n(z)&=(2\pi)^{-\frac{1}{2}}\left(1+\frac{\phi_{2n}(0)}{\kappa_{2n}}\right)^{-\frac{1}{2}}\left(\xi^{-n}\phi_{2n}(\xi)+\xi^n\phi_{2n}(\xi^{-1})\right),
\end{align}
setting $n$ to be $2n$ in Theorem 12.1.1 \footnote{ We note that since $f(\theta)=w(\cos \theta)|\sin \theta|$ is an even function, in the result of Theorem 12.1.1 on page 297 of \cite{SZ} we have $\bar{D}(\xi^{-1})=D(\xi^{-1})$.} on page 297 of \cite{SZ}, which yields
\begin{equation}\label{p1}
\lim_{n\rightarrow \infty}\frac{\phi_{2n}(\xi)D(\xi^{-1})}{\xi^{2n}}=1
\end{equation}
uniformly for $|\xi|\geq R >1$, as well as uniform limits
\begin{align}
\lim_{n\rightarrow \infty}\phi_{2n}(\xi^{-1})=0,\ \ \ \lim_{n\rightarrow \infty}\phi_{2n}(0)=0, \ \ \text{and}\ \ \   \lim_{n\rightarrow \infty}\kappa_{2n}=\kappa>0,\label{plims1}
\end{align}
from page 304 of \cite{SZ}.

For an asymptotic for the kernel $K_{n}(z,\bar{z})$ we will need an asymptotic for the derivative of the orthogonal polynomial $p_n(z)$.  Since $z=\frac{1}{2}(\xi+\xi^{-1})$, where $|\xi|>1$, when we solve for $\xi$ we have $\xi=z+\sqrt{z^2-1}$. Hence differentiating  \eqref{pz} with respect to $z$ yields
\begin{align}\label{pnprime}
\nonumber
\frac{d}{dz}p_n(z)=&(2\pi)^{-\frac{1}{2}}\left(1+\frac{\phi_{2n}(0)}{\kappa_{2n}}\right)^{-\frac{1}{2}} \\
& \ \ \cdot\frac{-n\xi^{-n}\phi_{2n}(\xi)+\xi^{-n+1}\phi_{2n}^{\prime}(\xi)  +n\xi^n\phi_{2n}(\xi^{-1})-\xi^{n-1}\phi_{2n}^{\prime}(\xi^{-1})    }{\sqrt{z^2-1}}.
\end{align}
For cancelation purposes our asymptotic for $p_n^{\prime}(z)$ will be in terms asymptotics of the functions $\phi_{2n}$ and their derivatives $\phi_{2n}^{\prime}$.  Since we already know the asymptotics of $\phi_{2n}(\xi)$ and $\phi_{2n}(\xi^{-1})$, by inspection of \eqref{pnprime} we see that we need asymptotics for $\phi_{2n}^{\prime}(\xi)$ and $\phi_{2n}^{\prime}(\xi^{-1})$ to complete the desired asymptotic of $p_n^{\prime}(z)$.

To obtain an asymptotic for $\phi_{2n}^{\prime}(\xi)$, observe that from \eqref{p1} we have
\begin{equation}\label{p6}\phi_{2n}(\xi)D(\xi^{-1})\xi^{-2n}-1=e_{n}(\xi),
\end{equation}
where $e_n(\xi)\rightarrow 0$ uniformly for $|\xi|\geq R>1$.  Furthermore, since the left-hand side of the above is holomorphic on $\C\setminus \overline{\D}$, we have that $e_n(\xi)$ is holomorphic $\C\setminus \overline{\D}$.  Thus differentiating  \eqref{p6} with respect to $\xi$ yields
\begin{align}\label{p4}
\frac{\phi_{2n}^{\prime}(\xi)D(\xi^{-1})}{\xi^{2n}}-\frac{\phi_{2n}(\xi)D^{\ \prime}(\xi^{-1})}{\xi^{2n+2}}-\frac{2n\phi_{2n}(\xi)D(\xi^{-1})}{\xi^{2n+1}} = e_n^{\prime}(\xi).
\end{align}

We seek to show that $\lim_{n\rightarrow \infty}e_n^{\ \prime}(\xi)=0$ uniformly on compact subsets of $\{\xi:|\xi|\geq r >1\}$.
Let us first make the claim that $\lim_{\xi \rightarrow \infty}e_n(\xi)$ exists.  From this limit we will have that not only is $e_n(\xi)$ holomorphic on $\C\setminus \overline{\D}$, but it is actually holomorphic on $\overline{\C}\setminus \overline{\D}$ for all $n=0,1,\dots$, in the extended complex plane.  To see that the claim is true, notice that
\begin{align*}
\lim_{\xi \rightarrow \infty}e_n(\xi)&=
\lim_{\xi \rightarrow \infty}\left[\left(\kappa_{2n}\xi^{2n}+\kappa_{2n-1}\xi^{2n-1}+\cdots +\kappa_1\xi+\kappa_0\right)D(\xi^{-1})\xi^{-2n}-1\right] \\
&=  \left\{
        \begin{array}{ll}
            \kappa_0D(0)-1, & \quad n = 0, \\
            \kappa_{2n}D(0)-1, & \quad n > 0.
        \end{array}
    \right.
\end{align*}
Since both $\kappa_0D(0)-1$ and $\kappa_{2n}D(0)-1$ exits and are finite, we have that the $\lim_{\xi\rightarrow \infty}e_n(\xi)$ exits for all $n=0,1\dots$, and consequently $e_n(\xi)$ is holomorphic at infinity for all $n\in \{0,1\dots\}$.

Given that $e_n(\xi)$ is holomorphic on $\overline{\C}\setminus \overline{\D}$, we can apply Cauchy's Formula for unbounded domains to $e_n(\xi)$ on the unbounded set $\{\xi:|\xi|\geq r >1\}$. Let $K\subset \{\xi:|\xi|\geq r >1\}$ be a compact set with $z\in K$.  Using this theorem we have
$$e_n(z)=\frac{-1}{2\pi i}\int_{\partial D(0,r)}\frac{e_n(t)}{t-z} \ dt +\lim_{z \rightarrow \infty} e_n(z).$$
Taking the derivative of the above with respect to $z$ and estimating the integral yields
\begin{align*}
|e_n^{\prime}(z)|&
\leq \frac{1}{2\pi }\int_{\partial D(0,r)}\left|\frac{e_n(t)}{(t-z)^2}\right| \ d|t|
\leq \frac{r}{d^2}\ \displaystyle\max_{t\in \partial D(0,r)}|e_n(t)|,
\end{align*}
where $d=\text{Dist}(K,\partial D(0,r))>0$ since $K$ is a compact subset of $\{\xi:|\xi|\geq r >1\}$.
Under the hypothesis that $e_n(z)$ converges uniformly to zero as $n\rightarrow \infty$ for $z\in\{\xi:|\xi|\geq r >1\}$, and in particular all $t   \in \partial D(0,r)$, we have
$$\lim_{n\rightarrow \infty}|e_n^{\ \prime}(z)|\leq \lim_{n\rightarrow \infty}\frac{r}{d^2}\ \displaystyle\max_{t\in \partial D(0,r)}|e_n(t)|=0,$$
where the limit is uniform in $z$ as $n\rightarrow \infty$.

Therefore by $K$ being an arbitrary compact subset of $\{\xi:|\xi|\geq r >1\}$, it follows that $\lim_{n\rightarrow \infty}e_n^{\ \prime}(\xi)=0$ uniformly on compact subsets of $\{\xi:|\xi|\geq r >1\}$.

Taking the above into account and solving \eqref{p4} for $\phi_{2n}^{\prime}(\xi)\xi^{-2n}$ we have
\begin{align}\label{p5}
\frac{\phi_{2n}^{\prime}(\xi)}{\xi^{2n}}=\frac{\phi_{2n}(\xi)D^{\ \prime}(\xi^{-1})}{\xi^{2n+2}D(\xi^{-1})}+\frac{2n\phi_{2n}(\xi)}{\xi^{2n+1}} +o(1).
\end{align}

For our asymptotic of $\phi_{2n}^{\prime}(\xi^{-1})$ we begin with equation (12.3.17) on page 303 in \cite{SZ} that says
$$\sum_{j=0}^{\infty}\overline{\phi_j(a)}\phi_j(w)=\frac{1}{(1-\bar{a}w)\overline{D(a)}D(w)}<\infty,$$
for $|a|<1$ and $|w|<1$.  Since the above sum  of analytic functions in $w$ and of anti-holomorphic functions in $\bar{a}$ converges uniformly on compact subsets of the unit disk, we can differentiate it with respect to $w$ and $\bar{a}$ on compact subsets of the unit disk and retain the uniform convergence of the derivatives of the infinite sum on compact subsets of the unit disk.  Taking these derivatives and setting $w=a=\xi^{-1}$ we achieve $\sum_{j=0}^{\infty}|\phi_j^{\prime}(\xi^{-1})|^2<\infty$.  Consequently on compact subsets of $\{\xi^{-1}:|\xi^{-1}|<1\}$ uniformly we have
\begin{equation}\label{phiprimeinv}
\lim_{n\rightarrow \infty}\phi_n^{\prime}(\xi^{-1})=0.
\end{equation}

We now have all the needed asymptotics to give the proof of Theorem \ref{GOP2}.

\begin{proof}[Proof of Theorem \ref{GOP2}]
To obtain an asymptotic expression for $K_{n}(z,z)$, we first note that since $f(\theta)$ is an even function, we have that $\phi_{2n}$ and $\phi_{2(n+1)}$ have real coefficients.  Thus $\phi_{2n}(0)=\overline{\phi_{2n}(0)}$ and $\phi_{2(n+1)}(0)=\overline{\phi_{2(n+1)}(0)}$, as well as $\kappa_{2n}=\overline{\kappa_{2n}}$ and  $\kappa_{2(n+1)}=\overline{\kappa_{2(n+1)}}$.  Taking this into consideration and using the Christoffel-Darboux formula and then the representation \eqref{pz} yields
\begin{align}
\nonumber
K_{n}(z,z)&=\sum_{j=0}^{n}p_j(z)p_j(\bar{z}) \\
\nonumber
&=\frac{k_{n}}{k_{n+1}}\cdot \frac{p_{n+1}(z)p_{n}(\bar{z})-p_{n}(z)p_{n+1}(\bar{z})}{2i\text{Im}(z)}   \\
\nonumber
&=\frac{k_{n}}{k_{n+1}2i\text{Im}(z)}(2\pi)^{-1}\left(1+\frac{\phi_{2(n+1)}(0)}{\kappa_{2(n+1)}}\right)^{-\frac{1}{2}}
\left(1+\frac{\phi_{2n}(0)}{\kappa_{2n}}\right)^{-\frac{1}{2}}\\
\nonumber
&\ \ \ \cdot \Bigg[
\left(\xi^{-(n+1)}\phi_{2(n+1)}(\xi)
+\xi^{n+1}\phi_{2(n+1)}(\xi^{-1})\right)\left(\overline{\xi^{-n}\phi_{2n}(\xi)}
+\overline{\xi^{n}\phi_{2n}(\xi^{-1})}\right)\\
\nonumber
&\ \ \ \ \ \ \ - \left(\xi^{-n}\phi_{2n}(\xi)
+\xi^{n}\phi_{2n}(\xi^{-1})\right)\left(\overline{\xi^{-(n+1)}\phi_{2(n+1)}(\xi)}
+\overline{\xi^{n+1}\phi_{2(n+1)}(\xi^{-1})}\right)\Bigg]    \\
\nonumber
&=\frac{k_{n}}{k_{n+1}2i\text{Im}(z)}(2\pi)^{-1}\left(1+\frac{\phi_{2(n+1)}(0)}{\kappa_{2(n+1)}}\right)^{-\frac{1}{2}}
\left(1+\frac{\phi_{2n}(0)}{\kappa_{2n}}\right)^{-\frac{1}{2}}\\
&\ \ \ \cdot \Bigg[\frac{\xi \phi_{2(n+1)}(\xi)}{\xi^{n+2}}\frac{\overline{\phi_{2n}(\xi)}}{\overline{\xi^n}} -\frac{\overline{\xi} \phi_{2n}(\xi)}{\xi^{n}}\frac{\overline{\phi_{2(n+1)}(\xi)}}{\overline{\xi^{n+2}}} \label{kz1}\\
&\ \ \ \ \ \ \ +\frac{\xi \phi_{2(n+1)}(\xi)}{\xi^{n+2}}\overline{\xi^{n}\phi_{2n}(\xi^{-1})} - \frac{ \phi_{2n}(\xi)}{\xi^{n}}\overline{\xi^{n+1}\phi_{2(n+1)}(\xi^{-1})} \label{kz2}\\
&\ \ \ \ \ \ \ +\xi^{n+1}\phi_{2(n+1)}(\xi^{-1})\frac{\overline{\phi_{2n}(\xi)}}{\overline{\xi^{n}}} -\xi^{n}\phi_{2n}(\xi^{-1})\frac{\overline{\xi \phi_{2(n+1)}(\xi)}}{\overline{\xi^{n+2}}} \label{kz3} \\
&\ \ \ \ \ \ \ +|\xi|^{2n}\xi\phi_{2(n+1)}(\xi^{-1})\overline{\phi_{2n}(\xi^{-1})}- |\xi|^{2n}\overline{\xi}\phi_{2n}(\xi^{-1})\overline{\phi_{2(n+1)}(\xi^{-1})}\Bigg].\label{kz4}
\end{align}
To find the asymptotic of the above, let us first focus on the terms within the large brackets.  Observe that for the expression \eqref{kz1}, using \eqref{p1} we have
\begin{equation}\label{kz5}
\lim_{n\rightarrow \infty} \frac{ \frac{\xi \phi_{2(n+1)}(\xi)}{\xi^{n+2}}\frac{\overline{\phi_{2n}(\xi)}}{\overline{\xi^n}} -\frac{\overline{\xi} \phi_{2n}(\xi)}{\xi^{n}}\frac{\overline{\phi_{2(n+1)}(\xi)}}{\overline{\xi^{n+2}}} }{ 2i \text{Im}(\xi)\left| \frac{\xi^n}{D(\xi^{-1})} \right|^2 }=1.
\end{equation}
Also note that using \eqref{p1} and first limit in \eqref{plims1}, the expressions \eqref{kz2}, \eqref{kz3}, and \eqref{kz4} are all $o(|\xi|^{2n})$. Combining this with \eqref{kz5} and appealing to the second two limits of \eqref{plims1} we achieve
\begin{align*}
K_{n}(z,z)=\frac{k_{n}}{k_{n+1}} \frac{\text{Im}(\xi)}{2\pi\text{Im}(z)}\left| \frac{\xi^n}{D(\xi^{-1})} \right|^2+o(|\xi|^{2n}).
\end{align*}
Thus
\begin{equation}\label{Ksq}
\left( K_{n}(z,z) \right)^2=\left(\frac{k_{n}}{k_{n+1}} \frac{\text{Im}(\xi)}{2\pi\text{Im}(z)}\right)^2\left| \frac{\xi^n}{D(\xi^{-1})} \right|^4 +o\left(|\xi|^{4n}\right).
\end{equation}

For our representation of $K_{n}(z,\bar{z})$, using the Christoffel-Darboux formula and then \eqref{pz} and \eqref{pnprime} it follows that
\begin{align*}\nonumber
K_{n}(z,\bar{z})&=\sum_{j=0}^{n}p_j(z)p_j(z) \\
&=\frac{k_{n}}{k_{n+1}}\cdot(p_{n+1}^{\prime}(z)p_{n}(z)-p_{n}^{\prime}(z)p_{n+1}(z)) \\
&=\frac{k_{n}}{k_{n+1}}\frac{1}{2\pi \sqrt{z^2-1}}\left(1+\frac{\phi_{2(n+1)}(0)}{\kappa_{2(n+1)}}\right)^{-\frac{1}{2}}
\left(1+\frac{\phi_{2n}(0)}{\kappa_{2n}}\right)^{-\frac{1}{2}} \\
& \cdot \Bigg[\Big(-(n+1)\xi^{-(n+1)}\phi_{2(n+1)}(\xi)+\xi^{-(n+1)+1}\phi_{2(n+1)}^{\prime}(\xi) \\
&\ \ \ \ \ \ \ \  \ \ +(n+1)\xi^{n+1}\phi_{2(n+1)}(\xi^{-1})-\xi^{(n+1)-1}
\phi_{2(n+1)}^{\prime}(\xi^{-1}) \Big) \\
&\ \ \ \ \ \ \ \ \cdot\Big(\xi^{-n}\phi_{2n}(\xi)+\xi^n\phi_{2n}(\xi^{-1})\Big) \\
&\ \ \ \ -\Big(-n\xi^{-n}\phi_{2n}(\xi)+\xi^{-n+1}\phi_{2n}^{\prime}(\xi)
 +n\xi^{n}\phi_{2n}(\xi^{-1})-\xi^{n-1}
\phi_{2n}^{\prime}(\xi^{-1}) \Big) \\
&\ \ \ \ \  \ \ \ \ \cdot\Big(\xi^{-(n+1)}\phi_{2(n+1)}(\xi)+\xi^{n+1}\phi_{2(n+1)}(\xi^{-1})\Big)\Bigg].
\end{align*}
Expanding what is in the large brackets and collecting like terms gives the following:
\begin{align}
\label{k1}
&-\xi^{-2n-1}\phi_{2n}(\xi)\phi_{2(n+1)}(\xi) \\
\label{k2}
&+\xi^{-2n}\phi_{2n}(\xi)\phi_{2(n+1)}^{\prime}(\xi)-\xi^{-2n}\phi_{2n}^{\prime}(\xi)\phi_{2(n+1)}(\xi) \\
\label{k3}
&+(2n+1)\xi\phi_{2(n+1)}(\xi^{-1})\phi_{2n}(\xi)\\
\label{k4}
&-\phi_{2n}(\xi)\phi_{2(n+1)}^{\prime}(\xi^{-1})+\xi^{-2}\phi_{2(n+1)}(\xi)\phi_{2n}^{\prime}(\xi^{-1})\\
\label{k5}
&-(2n+1)\xi^{-1}\phi_{2n}(\xi^{-1})\phi_{2(n+1)}(\xi) \\
\label{k6}
&+\phi_{2n}(\xi^{-1})\phi_{2(n+1)}^{\prime}(\xi)-\xi^2\phi_{2(n+1)}(\xi^{-1})\phi_{2n}^{\prime}(\xi) \\
\label{k7}
&+\xi^{2n+1}\phi_{2n}(\xi^{-1})\phi_{2(n+1)}(\xi^{-1}) \\
\label{k8}
&-\xi^{2n}\phi_{2n}(\xi^{-1})\phi_{2(n+1)}^{\prime}(\xi^{-1})+\xi^{2n}\phi_{2(n+1)}(\xi^{-1})\phi_{2n}^{\prime}(\xi^{-1}).
\end{align}

Starting with easiest of the terms above, using \eqref{p1}, the first limit in \eqref{plims1}, and \eqref{phiprimeinv}, we have that \eqref{k4}, \eqref{k7}, and \eqref{k8} are all $o(\xi^{2n})$.

Using \eqref{p5} in  \eqref{k2} yields
\begin{align}
\nonumber
\xi^{-2n}\phi_{2n}&(\xi)\phi_{2(n+1)}^{\prime}(\xi)-\xi^{-2n}\phi_{2n}^{\prime}(\xi)\phi_{2(n+1)}(\xi) \\
\nonumber
&=\frac{\phi_{2n}(\xi)\phi_{2(n+1)}(\xi)D^{\ \prime}(\xi^{-1})}{\xi^{2(n+1)}D(\xi^{-1})}+\frac{2(n+1)\phi_{2n}(\xi)\phi_{2(n+1)}(\xi)}{\xi^{2n+1}}+\phi_{2n}(\xi)o(1)\\
\nonumber
&\ \ \ -\frac{\phi_{2n}(\xi)D^{\ \prime}(\xi^{-1})\phi_{2(n+1)}(\xi)}{\xi^{2(n+1)}D(\xi^{-1})}-\frac{2n\phi_{2n}(\xi)\phi_{2(n+1)}(\xi)}{\xi^{2n+1}}-\phi_{2(n+1)}(\xi)o(1)\\
\label{k9}
&=\frac{2\phi_{2n}(\xi)\phi_{2(n+1)}(\xi)}{\xi^{2n+1}}+o(\xi^{2n}),
\end{align}
where we have appealed to \eqref{p1} for the last equality.

Turning now to \eqref{k6}, again using \eqref{p5} gives us
\begin{align}
\nonumber
\phi_{2n}&(\xi^{-1})\phi_{2(n+1)}^{\prime}(\xi)-\xi^2\phi_{2(n+1)}(\xi^{-1})\phi_{2n}^{\prime}(\xi)\\
\nonumber
&=\frac{\phi_{2n}(\xi^{-1})\phi_{2(n+1)}(\xi)D^{\ \prime}(\xi^{-1})}{\xi^2D(\xi^{-1})}+\frac{2(n+1)\phi_{2n}(\xi^{-1})\phi_{2(n+1)}(\xi)}{\xi}
+\phi_{2n}(\xi^{-1})\xi^{2(n+1)}o(1)\\
\nonumber
&\ \ \ \ -\frac{\phi_{2(n+1)}(\xi^{-1})\phi_{2n}(\xi)D^{\ \prime}(\xi^{-1})}{D(\xi^{-1})}-2n\phi_{2(n+1)}(\xi^{-1})\phi_{2n}(\xi)\xi
-\xi^{2(n+1)}\phi_{2(n+1)}(\xi^{-1})o(1) \\
\label{k10}
&=\frac{2(n+1)\phi_{2n}(\xi^{-1})\phi_{2(n+1)}(\xi)}{\xi}-2n\xi \phi_{2(n+1)}(\xi^{-1})\phi_{2n}(\xi)+o(\xi^{2n}),
\end{align}
where are using \eqref{p1} to obtain the second equality.

Combining \eqref{k1}, \eqref{k9}, \eqref{k3}, \eqref{k5}, \eqref{k10}, and the fact that the other terms were $o(\xi^{2n})$,  for the terms from the large brackets we have
\begin{align}\nonumber
&\frac{\phi_{2n}(\xi)\phi_{2(n+1)}(\xi)}{\xi^{2n+1}}+\frac{\phi_{2n}(\xi^{-1})\phi_{2(n+1)}(\xi)}{\xi}
+\xi\phi_{2(n+1)}(\xi^{-1})\phi_{2n}(\xi)+o(\xi^{2n})\\
\label{b1}
&=\frac{\phi_{2n}(\xi)\phi_{2(n+1)}(\xi)}{\xi^{2n+1}}+o(\xi^{2n}),
\end{align}
by yet another time \eqref{p1}, and the first limit in \eqref{plims1}.

Thus using \eqref{b1}, appealing to \eqref{p1}, and the second two limits in \eqref{plims1}, we see that
\begin{equation*}
K_{n}(z,\bar{z})=\frac{k_{n}}{k_{n+1}}\frac{\xi^{2n+1}}{2\pi\sqrt{z^2-1}\left(D(\xi^{-1})\right)^2}+o(\xi^{2n}).
\end{equation*}

From above asymptotic we obtain
\begin{equation}\label{A0sq}
\left| K_{n}(z,\bar{z}) \right|^2=\left(\frac{k_{n}}{k_{n+1}}\right)^2\frac{|\xi^{2n+1}|^2}{4\pi^2|z^2-1|\left|D(\xi^{-1})\right|^4}
+o\left(|\xi|^{4n}\right).
\end{equation}

Therefore using \eqref{KNN} from the proof of Theorem \ref{OP} for a representation of $h_n^P$, and the expressions \eqref{Ksq} and \eqref{A0sq},  as $n\rightarrow \infty$ we have
\begin{align*}
h_n^P(z)&=\frac{1}{4\pi\left(\text{Im}(z)\right)^2}\left(1-\frac{\left|K_{n}(z,\bar{z})\right|^2}{\left(K_{n}(z,z)\right)^2}\right)\\
&= \frac{1}{4\pi\left(\text{Im}(z)\right)^2}
\left(1-\frac{\left(\frac{k_{n}}{k_{n+1}}\right)^2\frac{|\xi^{2n+1}|^2}{4\pi^2|z^2-1|\left|D(\xi^{-1})\right|^4}+o\left(|\xi|^{4n}\right)}
{\left(\frac{k_{n}}{k_{n+1}} \frac{\text{Im}(\xi)}{2\pi\text{Im}(z)}\right)^2\left| \frac{\xi^n}{D(\xi^{-1})} \right|^4+o(|\xi|^{4n})}\right)\\
&=\frac{1}{4\pi\left(\text{Im}(z)\right)^2}\left(1-\frac{\left(\text{Im}(z)\right)^2|\xi|^2+o(1)}{|z^2-1|(\text{Im}(\xi))^2+o(1)}\right)\\
&=\frac{1}{4\pi\left(\text{Im}(z)\right)^2}\left(1-\frac{\left(\text{Im}(z)\right)^2|z+\sqrt{z^2-1}|^2+o(1)}{|z^2-1|(\text{Im}(z+\sqrt{z^2-1} )^2+o(1)}\right),
\end{align*}
and hence completes the proof of Theorem \ref{GOP2}.
\end{proof}

\textbf{Acknowledgements.}  The author would like to thank his advisor Igor Pritsker for all his help with the project and for helping with financial support through his grant from the National Security Agency.  The author would also like to thank Jeanne LeCaine Agnew Endowed Fellowship and the Vaughn Foundation via Anthony Kable for financial support.


\begin{thebibliography}{00}

\bibitem{A}
R. J. Adler, The Geometry of Random Fields, Wiley, Chichester, 1981.

\bibitem{BY}
T. Bayraktar, Equidistribution of zeros of random holomorphic sections, Indiana Univ. Math. J., to appear.

\bibitem{BRS}
A. Bharucha-Reid and M. Sambandham, Random Polynomials, Academic Press, New York, 1986.

\bibitem{BP}
A. Bloch and G. P\'{o}lya, On the roots of a certain algebraic equation, Proc. Lond. Math. Soc. 33 (1932) 102--114.

\bibitem{BL1}
T. Bloom, Random polynomials and Green functions, Int. Math. Res. Not. 28 (2005) 1689--1708.

\bibitem{BL2}
T. Bloom, Random polynomials and (pluri) potential theory, Ann. Polon. Math. 91 (2007) 131--141.

\bibitem{BLL}
T. Bloom and N. Levenberg, Random polynomials and pluripotential-theoretic extremal functions, Potential Anal. 42 (2015) 311--334.

\bibitem{BLSH}
T. Bloom and B. Shiffman, Zeros of random polynomials on $\C^m$, Math. Res. Lett. 14 (2007) 469--479.

\bibitem{EK}
A. Edelman and E. Kostlan, How many zeros of a random polynomial are real?, Bull. Amer. Math. Soc. 32 (1995) 1-37.

\bibitem{D}
M. Das, Real zeros of a random zum of orthogonal polynomials, Proc. Amer. Math. Soc. 1 (1971) 147--153.

\bibitem{DB}
M. Das and S. Bhatt, Real roots of random harmonic equations, Indian J. Pure Appl. Math. 13 (1982) 411--420.

\bibitem{EO}
P. Erd\H{o}s and A. Offord, On the number of real roots of a random algebraic equation, Proc. Lond. Math. Soc. 6 (1956) 139--160.

\bibitem{F}
K. Farahmand, Random polynomials with complex coefficients, Stat. Prob. Lett. 27 (1996) 347--355.

\bibitem{FB}
K. Farahmand, Topics in Random Polynomials, Pitman Res. Notes in Math. Series 393, Addison Wesley Longman Limited, Edinburgh Gate, Harlow, England, 1998.

\bibitem{FG}
K. Farahmand and A. Grigorash, Complex zeros of trignometric polynomials with standard normal random coefficients, J.  Math. Anal. App. 262 (2001) 554--563.


\bibitem{KJ}
K. Farahmand and J. Jahangiri, Complex roots of a class of random algebraic polynomials, J. Math. Anal. App. 226 (1998) 220--228.

\bibitem{FL}
N. Feldheim, Zeros of Gaussian analytic functions with translation-invariant distribution, Israel J. Math. 195 (2012) 317--345.

\bibitem{HM}
J. Hammersley, The zeros of a random polynomial, Proc. of the Third Berk. Sym. on Math. Stat. and Prob. 1954-1955 vol. II, University of Cal. Press, Berkeley and Los Angeles (1956) 89--111.

\bibitem{ZGAF}
J. Hough, M. Krishnapur, Y. Peres, B. Vir$\acute{\text{a}}$g, Zeros of Gaussian Analytic Functions and Determinantal Point Processes, Univ. Lect. Ser. 51. American Mathematical Society, Providence, RI, 2009.

\bibitem{IZ}
I. Ibragimov and O. Zeitouni, On the roots of random polynomials, Trans. Amer. Math. Soc. 349 (1997) 2427--2441.

\bibitem{K1}
M.  Kac,  On  the  average  number  of  real  roots  of  a  random  algebraic  equation,  Bull.
Amer. Math. Soc. 49 (1943) 314--320.

\bibitem{K2}
M. Kac, On the average number of real roots of a random algebraic equation II, Proc.
Lond. Math. Soc. 50 (1948) 390--408.

\bibitem{AL}
A. Ledoan, Explicit formulas for the distribution of complex zeros of random polynomials, presentation at the $15^{\text{th}}$ Int. Conf.  Approx. Theory, San Antonio, TX, May 25, 2016.

\bibitem{LO}
J. Littlewood and A. Offord, On the number of real roots of a random algebraic equation, J. Lond. Math. Soc. 13 (1938) 288--295.

\bibitem{LPX}
D. Lubinsky, I. Pritsker, and X. Xie, Expected number of real zeros for random linear combinations of orthogonal polynomials, Proc.  Amer. Math. Soc. 144 (2016) 1631--1642.

\bibitem{LPX2}
D. Lubinsky, I. Pritsker, and X. Xie, Expected number of real zeros for random orthogonal polynomials, Math. Proc. Camb. Phil. Soc., to appear.

\bibitem{R}
S. Rice, Mathematical theory of random noise, Bell System Tech J. 25 (1945) 46--156.

\bibitem{RU}
W. Rudin, Real and Complex Analysis, third ed., McGraw-Hill, New York, NY, 1987.

\bibitem{SV}
L. A. Shepp and R. J. Vanderbei, The complex zeros of random polynomials, Trans. Amer. Math. Soc. 347 (1995) 4365--4384.

\bibitem{SHZ1}
B. Shiffman and S. Zelditch, Distribution of zeros of random and quantum chaotic sections of positive line bundles, Comm. Math. Phys. 200 no. 3 (1999)  661--683.

\bibitem{SHZ2}
B. Shiffman and S. Zelditch, Equilibrium distribution of zeros of random polynomials, Int. Math. Res. Not. 1 (2003) 25--29.

\bibitem{SHZ3}
B. Shiffman and S. Zelditch, Random complex fewnomials I, Notions of Positivity and the Geometry of Polynomials, Trends Math., Birkh\"{a}user/Springer Basel AG, Basel (2001), 375--400.

\bibitem{SZ}
G.~Szeg\H{o}, Orthogonal Polynomials, fourth ed., Amer. Math. Soc., Providence, RI,  1975.

\bibitem{TV}
T. Tao and V. Vu, Local universality of zeros of random polynomials, Int. Math. Res. Not. (2014) 1--84.

\bibitem{Ul}
D. Ullrich, Complex Made Simple, Amer. Math. Soc., Providence, RI, 2008.

\bibitem{CZRS}
R. J. Vanderbei, The complex zeros of random sums, arXiv: 1508.05162v1   Aug. 21, 2015.

\bibitem{WG}
Y. Wang, Bounds on the average number of real roots of a random algebraic equation (Chinese), Chinese Ann. Math. Ser. A 4 no. 5 (1983) 601--605.  An English summary appears in Chinese Ann. Math. Ser. B 4 no 4 (1983) 527.

\bibitem{WL}
J. Wilkins Jr., An asymptotic expansion for the expected number of real zeros of a random polynomial, Proc. Amer. Math. Soc. 103 (1988) 1249--1258.

\bibitem{AY}
A. Yeager, Zeros of random linear combinations of entire functions with complex Gaussian coefficients, arXiv: 1605.06836v1  May 22, 2016.

\end{thebibliography}
\end{document}